\documentclass{amsart}
\usepackage{amsmath,amssymb, amscd, stmaryrd}
\usepackage{fullpage}
\usepackage{enumerate}
\usepackage{cite}
\usepackage{url}
\usepackage{framed}
\usepackage[dvips,dvipdf]{graphicx}
\usepackage[usenames,dvipsnames]{color}
\usepackage{color}
\usepackage{tabularx}
\usepackage{array}
\usepackage{multirow}
\usepackage{changes}
\usepackage{dcolumn}
\usepackage{esint}
%%%%%%%%%%%%%%%%%%%%%%%%%

\setcounter{tocdepth}{1}
\setlength{\marginparwidth}{1.12in}
%%%%%%%%%%%%%%%%%%%%%%%%%%%%%%%%%%%%%%%%%%%%%%%%%%%%%%%%%%
%%% new counter for enumerate %%%%%%%%%%%%%%%%%%%%%%%%%%%%
%%%%%%%%%%%%%%%%%%%%%%%%%%%%%%%%%%%%%%%%%%%%%%%%%%%%%%%%%%%

%%%%%%%%%%%%%%%%%%%%%%%%%%%%%%%%%%%%%%%%%%%%%%%%%%

%% Centering table columns at a decimal point
\newcolumntype{d}[1]{D..{#1}}

%
% THEOREM TYPE ENVIRONMENTS:
%
\newtheorem{theorem}{Theorem}[section]
\newtheorem{proposition}[theorem]{Proposition}

\theoremstyle{definition}

\theoremstyle{definition}

\theoremstyle{remark}
\newtheorem{remark}[theorem]{Remark}
\newtheorem{example}[theorem]{Example}

%%%
% Common macros of Jeff
%%%

\newcommand{\CC}{\mathbb C}

\newcommand{\RR}{\mathbb R}

\newcommand{\ii}{\mathfrak i}

%%%
% Common macros of Sam
%%%
 % ceiling function
\newcommand{\floor}[1]{\left\lfloor#1\right\rfloor} % floor function
 % transpose
\renewcommand\hat\widehat
\renewcommand\tilde\widetilde

\renewcommand{\Re}{\text{Re}\,}
\renewcommand{\Im}{\text{Im}\,}

%%%%%%%%%%%%%%%%%%%%%%%%%%%%%%%%%%%%%%%%%%%%%%%%%%%%%%%%%%%%%%%%%%%%%%%%%%%%
% AUTHORS AND AFFILIATIONS
%%%%%%%%%%%%%%%%%%%%%%%%%%%%%%%%%%%%%%%%%%%%%%%%%%%%%%%%%%%%%%%%%%%%%%%%%%%%

\author[J. Ovall]{Jeffrey S. Ovall}
\address{Jeffrey S. Ovall,
  Fariborz Maseeh Department of Mathematics and Statistics,
  Portland State University,
  Portland, OR 97201}
\email{jovall@pdx.edu}

\author[S. Reynolds]{Samuel E. Reynolds}
\address{Samuel Reynolds,
	Fariborz Maseeh Department of Mathematics and Statistics,
	Portland State University,
	Portland, OR 97201}
\email{ser6@pdx.edu}

%\thanks{This work was partially supported by the NSF Grants
%  DMS-1522471 and DMS-1624776.  Numerical studies were facilitated
%  by the Portland Institute for Computational Sciences.}

%\thanks{Ovall was partially supported by
%the NSF Grant DMS-1522471.  }

\thanks{This work was partially supported by the
	National Science Foundation through
	NSF grant DMS-2012285 and
	NSF RTG grant DMS-2136228.}

%%%%%%%%%%%%%%%%%%%%%%%%%%%%%%%%%%%%%%%%%%%%%%%%%%%%%%%%%%%%%%%%%%%%%%%%%%%%
%%%%%%%%%%%%%%%%%%%%%%%%%%%%%%%%%%%%%%%%%%%%%%%%%%%%%%%%%%%%%%%%%%%%%%%%%%%%
%%%%%%%%%%%%%%%%%%%%%%%%%%%%%%%%%%%%%%%%%%%%%%%%%%%%%%%%%%%%%%%%%%%%%%%%%%%%

\begin{document}
\title[Cells with Holes]{
		Evaluation of Inner Products of Implicitly-defined Finite Element
		Functions on Multiply Connected Planar Mesh Cells
	}
\date{March 13, 2023}

\begin{abstract}
	Recent advancements in finite element methods allows for the
	implementation of mesh cells with curved edges.
	In the present work, we develop the tools necessary to
	employ multiply connected mesh cells, i.e. cells with holes,
	in planar domains.
	Our focus is efficient evaluation the $H^1$ semi-inner product
	and $L^2$ inner product of implicitly-defined finite element functions
	of the type arising in
        boundary element based finite element methods (BEM-FEM) and
        virtual element methods (VEM).
        %boundary element methods and virtual element
	%methods.
	These functions may be defined by specifying a polynomial Laplacian
	and a continuous Dirichlet trace.
	We demonstrate that these volumetric integrals can be reduced to integrals
	along the boundaries of mesh cells, thereby avoiding the need to perform
	any computations in cell interiors.
	The dominating cost of this reduction is solving
	a relatively small Nystr\"om system to obtain a Dirichlet-to-Neumann map,
	as well as the solution of two more Nystr\"om systems to obtain an
	``anti-Laplacian'' of a harmonic function,
	which is used for computing the $L^2$ inner product.
	We demonstrate that high-order accuracy can be achieved with several
	numerical examples.
\end{abstract}

\maketitle

%%%%%%%%%%%%%%%%%%%%%%%%%%%%%%%%%%%%%%%%%%%%%%%%%%%%%%%%%%%%%%%%%%%%%%%%%%%%
%%%%%%%%%%%%%%%%%%%%%%%%%%%%%%%%%%%%%%%%%%%%%%%%%%%%%%%%%%%%%%%%%%%%%%%%%%%%
\section{Introduction}
	\label{introduction-section}

	Let $K\subset\RR^2$ be an open, bounded, and connected planar region
	with a piecewise $C^2$ smooth boundary $\partial K$.
	Assume the boundary $\partial K$ is partitioned into a finite number of
	\textit{edges}, with each edge being $C^2$ smooth and connected.
	Edges are permitted to meet at interior angles strictly between
	$0$ and $2\pi$, so that $\partial K$ has no cusps or slits.
	Consider the problem of computing
	the $H^1$ semi-inner product
	and $L^2$ inner product
	\begin{align}
		\label{goal-integral-1}
		\int_K \nabla v \cdot \nabla w~dx~,
		\\
		\label{goal-integral-2}
		\int_K v \, w~dx~,
	\end{align}
	where $v,w$ are implicitly defined elements of a
	\textit{local Poisson space} $V_p(K)$, which we define as follows.
	Fix a natural number $p$, and let $V_p(K)$ consist of the functions
	$v\in H^1(K)\cap C^2(K)$ such that:
	\begin{enumerate}
		\item for $p=1$, $v$ is harmonic in $K$;
		\item for $p \geq 2$, the Laplacian $\Delta v$ is a
			polynomial of degree at most $p-2$ in $K$;
		\item the trace $v|_{\partial K}$ is continuous;
		\item the trace $v|_e$ along any edge $e\subset\partial K$ is the
			trace of a polynomial of degree at most $p$
			(defined over all of $\RR^2$).
	\end{enumerate}
	Note, for instance, that $V_p(K)$ contains all of the polynomials
	of degree at most $p$.
	Such subspaces of $H^1(K)$ arise naturally in the context of
	finite element methods posed over
	\emph{curvilinear meshes}, whose mesh cells have curved edges.
	Our present interest is extending the application of
	theses spaces to curvilinear meshes with \emph{punctured}
	(i.e. multiply connnected)
	mesh cells; see Figure \ref{mesh-examples-figure}.

	%%%%%%%%%%%%%%%%%%%%%%%%%%%%%%%%%%%%%%%%%%%%%%%%%%%%%%%%%%%%%%%%%%%%%%%%%%%%
	\begin{figure}
		\centering
		\includegraphics[width=0.25\linewidth]{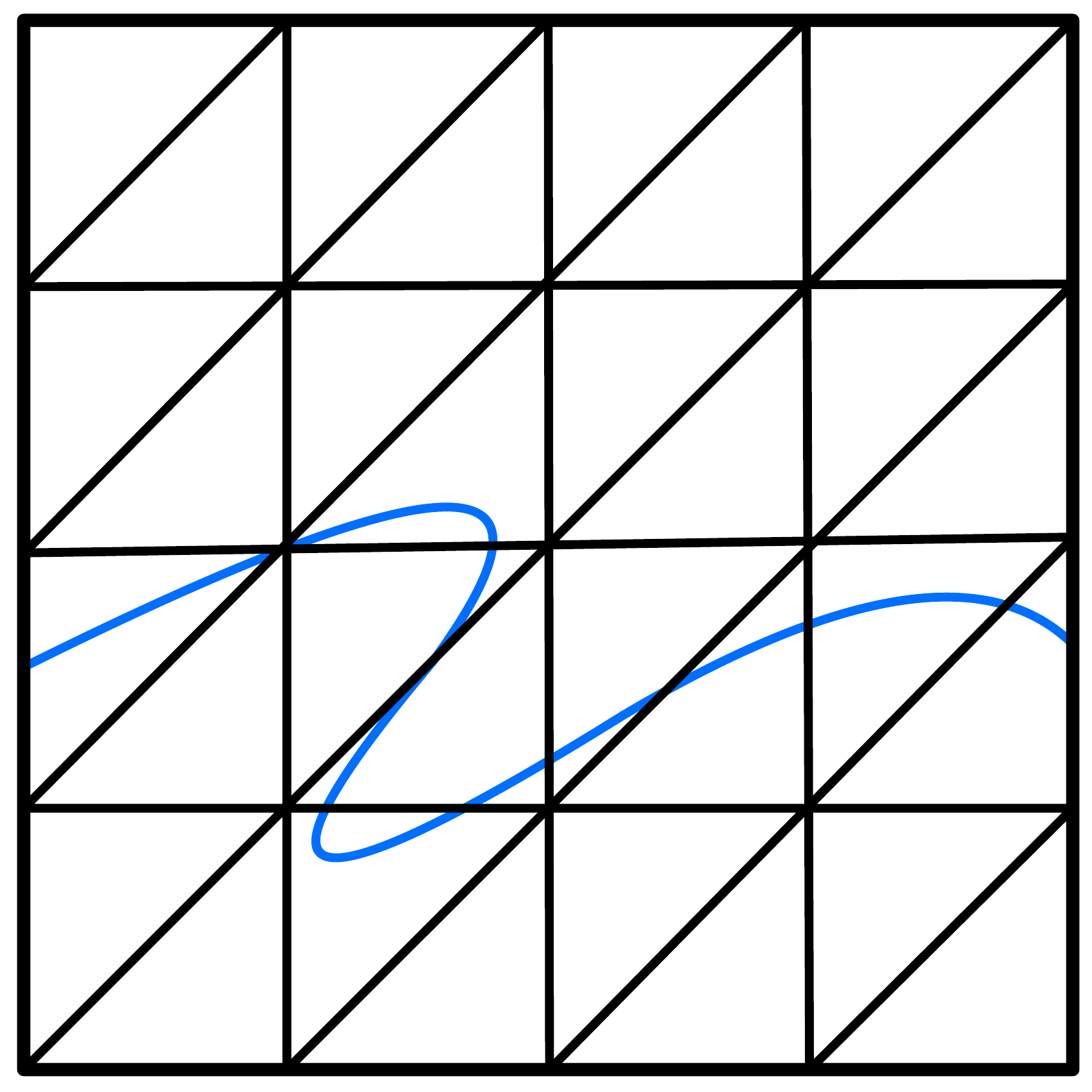}
		\hspace{5mm}
		\includegraphics[width=0.25\linewidth]{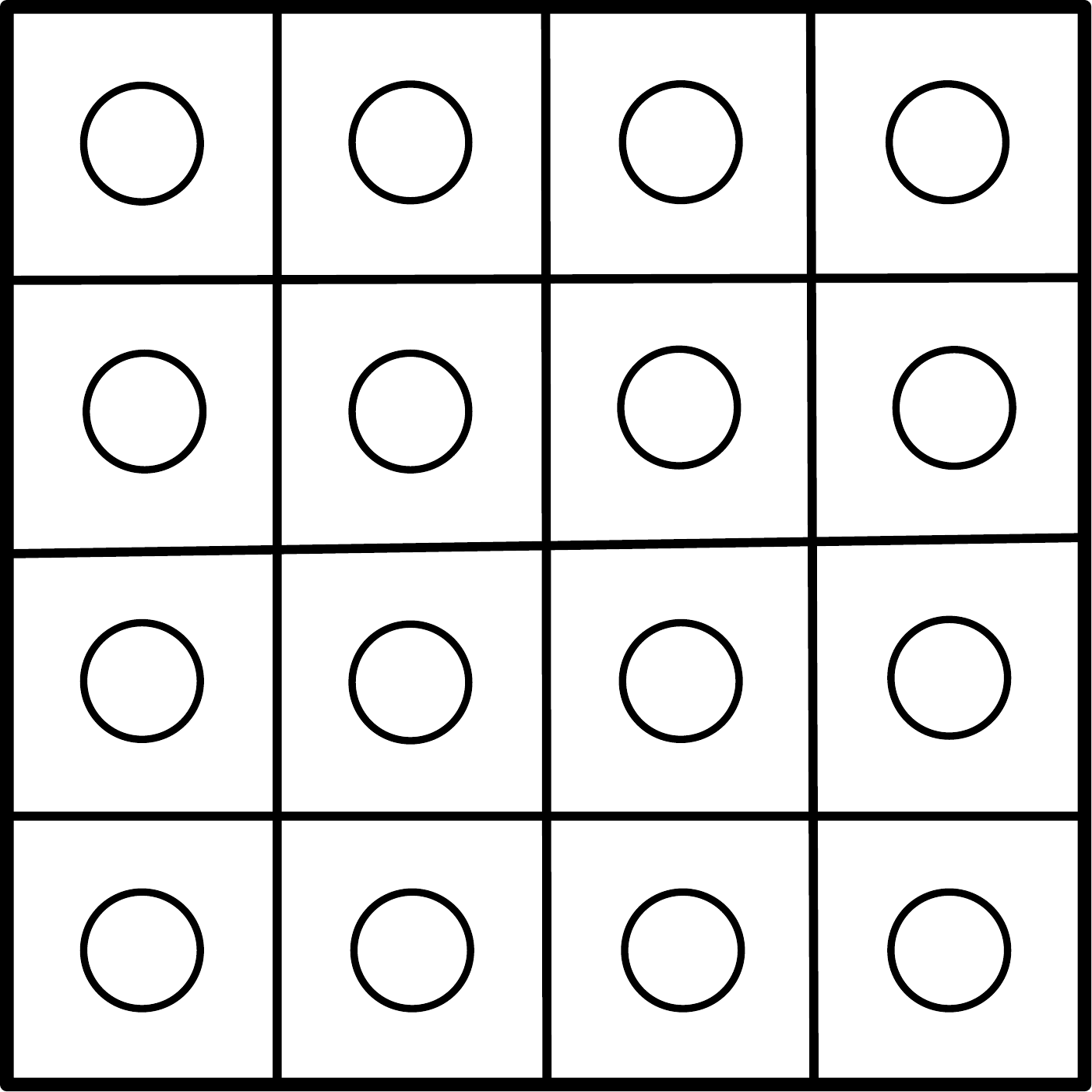}
		\hspace{5mm}
		\includegraphics[width=0.2\linewidth]{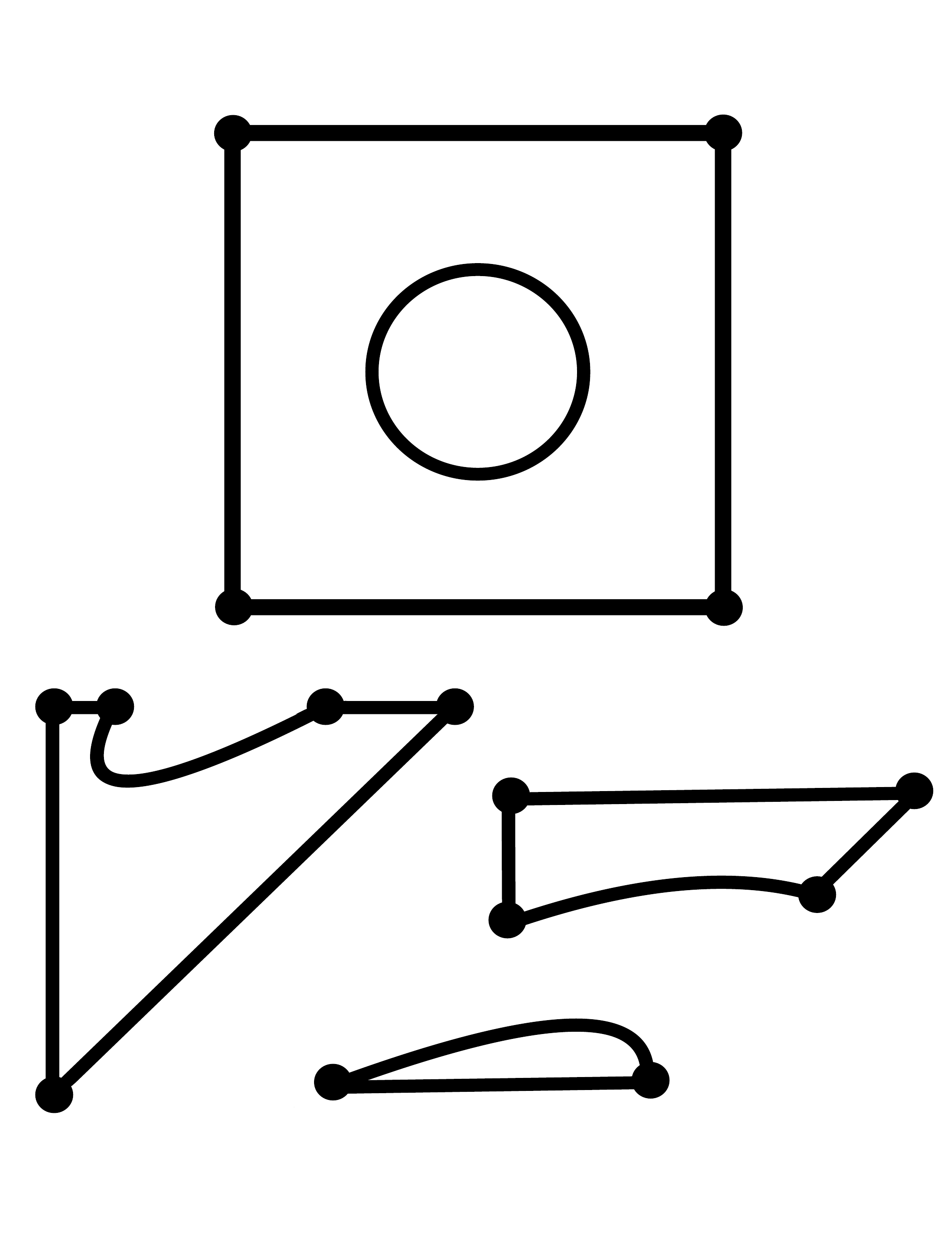}
		\caption{
			Left:
				A curvilinear mesh of a square domain featuring
				a curvilinear interface.
			Center:
				A curvilinear mesh of a square domain with
				circular punctures.
			Right:
				A few of the cells found in these meshes.
		}
		\label{mesh-examples-figure}
	\end{figure}
	%%%%%%%%%%%%%%%%%%%%%%%%%%%%%%%%%%%%%%%%%%%%%%%%%%%%%%%%%%%%%%%%%%%%%%%%%%%%

	Such spaces of implicitly-defined functions,
	whether arising from curvilinear, polygonal, or more
	conventional tetrahedral meshes, have appeared in the literature
	frequently in the last several years.
	Many readers are likely to be familiar with Virtual Element Methods
	(VEM), which have gained signifcant popularity
	in the last decade and have a large
	body of recent publications
	\cite{
		BeiraodaVeiga2013,
		BeiraodaVeiga2014,
		BeiraodaVeiga2020,
		Brezzi2014
	},
	some of which concern employing curvilinear mesh cells \cite{
		BeiraodaVeiga2019,
		Aldakheel2020,
		Dassi2021,
		Wriggers2020
	}.
	Our approach is more closely aligned with
	Boundary Element Based Finite Element Methods (BEM-FEM) and Trefftz methods
	\cite{
		Copeland2009,
		Hakula2022,
		Hofreither2010,
		Hofreither2011,
		Hofreither2016,
		Weisser2011,
		Weisser2014,
		Weisser2017,
		Weisser2018,
		Weisser2019,
		Weisser2019a
	}.
	In contrast to VEM, we actually construct a basis of $V_p(K)$
	and do not require projections and so-called stabilization terms.
	Futhermore, while all computations needed for forming
	the finite element system do not involve any calculations on the
	interior of mesh cells,
	and despite these basis functions being implicitly-defined,
	we have the option of obtaining
	information about the interior values, gradient, and higher derivatives
	of basis functions on the interior of each cell,
	and therefore those of the finite element solution.
	The basic framework for our approach was proposed in \cite{Anand2020},
	where we proposed methods for construction of a basis
	that automatically preserves $H^1$ conformity,
	and proved estimates for associated interpolation operators.
	Subsequently, in \cite{Ovall2022},
	we demonstrated that practical computation of
	$H^1$ semi-inner products and $L^2$ inner products
	of functions in $V_p(K)$ are feasible
	whenever $K$ is simply connected (i.e. has no holes).
	Indeed, we showed that these volumetric integrals can be reduced to
	boundary integrals, thereby circumventing any need to develop
	2D quadratures for the unconventional geometries present in
	curvilinear meshes.
	The goal of this work is to extend these results to the case where
	$K$ is multiply connected.

	In Section \ref{simply-connected-section},
	we briefly summarize how \eqref{goal-integral-1} and \eqref{goal-integral-2}
	may be computed in the case when $K$ is simply connected.
	In Section \ref{punctured-cells-section}, we address how these calculations
	can be modified in order to accomodate multiply connected mesh cells.
	We provide a handful of numerical illustrations in
	Section \ref{numerical-experiments-section},
	and conclude in Section \ref{conclusion-section}.

%%%%%%%%%%%%%%%%%%%%%%%%%%%%%%%%%%%%%%%%%%%%%%%%%%%%%%%%%%%%%%%%%%%%%%%%%%%%
%%%%%%%%%%%%%%%%%%%%%%%%%%%%%%%%%%%%%%%%%%%%%%%%%%%%%%%%%%%%%%%%%%%%%%%%%%%%
\section{Simply Connected Mesh Cells}
	\label{simply-connected-section}

	Let $K$ be simply connected, and suppose that $v,w\in V_p(K)$.
	The goal of this section is to provide an overview of some of the
	techniques used to compute the integrals
	\eqref{goal-integral-1} and \eqref{goal-integral-2}.
	In particular, we will see that each of these volumetric integrals
	can be feasibly reduced to boundary integrals over $\partial K$.
	These were discussed in detail in \cite{Ovall2022},
	although since its publication we have made some improvements
	that reduce computational cost, which we present here.

	%%%%%%%%%%%%%%%%%%%%%%%%%%%%%%%%%%%%%%%%%%%%%%%%%%%%%%%%%%%%%%%%%%%%%%%%%%%%
	\subsection{The $H^1$ Semi-inner Product}

		Given $v,w\in V_p(K)$,
		note that $\Delta v$ and $\Delta w$ are given polynomials
		of degree at most $p-2$.
		Let $P$ and $Q$ be polynomials of degree at most $p$ satisfying
		\begin{align*}
			\Delta(v-P) = 0
			~,\quad
			\Delta(w-Q) = 0
			~.
		\end{align*}
		As pointed out in \cite{Karachik2010},
		such polynomials $P$ and $Q$ can be explicitly constructed term-by-term
		by observing that
		\begin{align}
			\label{anti-laplacian-polynomial}
			P_\alpha(x) = \frac{|x|^2}{4(|\alpha|+1)!}
			\sum_{k=0}^{\floor{|\alpha|/2}}
			\frac{(-1)^k (|\alpha|-k)!}{(k+1)!}
			\left(
				\frac{|x|^2}{4}
			\right)^k
			\Delta^k(x^\alpha)
		\end{align}
		is a polynomial anti-Laplacian of $x^\alpha$ for a multi-index $\alpha$.
		That is, $\Delta P_\alpha(x) = x^\alpha$.
		Note that, in practice, $P_\alpha$ is obtained only by manipulation
		of polynomial coefficients, and poses no computational barrier.
		The same can be said of other operations involving polynomials,
		such as gradients, etc.

		Since the functions
		\begin{align*}
			\phi = v-P~,
			\quad
			\psi = w-Q
		\end{align*}
		are harmonic, we have the expansion
		\begin{align}
			\label{h1-semi-inner-product}
			\begin{split}
				\int_K \nabla v\cdot\nabla w~dx
				&= \int_K \nabla\phi\cdot\nabla w~dx
					+ \int_K\nabla P\cdot\nabla\psi~dx
					+ \int_K\nabla P\cdot\nabla Q~dx
				\\& = \int_{\partial K} w \,
					\frac{\partial\phi}{\partial\mathbf{n}}~ds
					+ \int_{\partial K} P \,
					\frac{\partial\psi}{\partial\mathbf{n}}~ds
					+ \int_K\nabla P\cdot\nabla Q~dx~.
			\end{split}
		\end{align}
		For the first two integrals in the final expression, the normal derivatives
		$\partial\phi/\partial\mathbf{n}$ and
		$\partial\psi/\partial\mathbf{n}$
		may be computed using the Dirichlet-to-Neumann map discussed below.
		Furthermore, $\nabla P\cdot\nabla Q$ is
		clearly a polynomial of degree at most $2p-2$.
		As noted in \cite{Antonietti2018},
		a straightforward application of the Divergence Theorem shows that
		\begin{align}
			\label{integrate-polynomial}
			\int_K x^\alpha ~dx
			=
			\frac{1}{2+|\alpha|}
			\int_{\partial K} (x\cdot\mathbf{n}) \, x^\alpha ~ds
			~.
		\end{align}
		In this fashion, we reduce the volumetric integral
		$\int_K \nabla v\cdot\nabla w~dx$
		to readily computable boundary integrals.

	%%%%%%%%%%%%%%%%%%%%%%%%%%%%%%%%%%%%%%%%%%%%%%%%%%%%%%%%%%%%%%%%%%%%%%%%%%%%
	\subsection{A Dirichlet-to-Neumann Map}

		Consider the problem of determining
		the normal derivative of a harmonic function $\phi$ given
		its Dirichlet trace $\phi|_{\partial K}$.
		Recall that $\hat\phi$ is a harmonic conjugate of a
		harmonic function $\phi$ whenever $\phi,\hat\phi$ are
		continuously twice differentiable on $K$ and satisfy the Cauchy-Riemann
		equations:
		\begin{align*}
			\frac{\partial\phi}{\partial x_1}
			=
			\frac{\partial\hat\phi}{\partial x_2}
			~, \quad
			\frac{\partial\phi}{\partial x_2}
			=
			-\frac{\partial\hat\phi}{\partial x_1}
			~.
		\end{align*}
		Given that $\phi$ is harmonic on a simply connected domain $K$,
		the existence of a harmonic conjugate of $\phi$ is guaranteed,
		and $\hat\phi$ is unique up to an additive constant.
		If $\partial K$ is smooth, for every $x\in\partial K$ it holds that
		\begin{align}
			\label{harmonic-conjugate-integral-equation-incomplete}
			\frac12 \, \hat\phi(x) + \int_{\partial K}
			\dfrac{\partial G(x,y)}{\partial\mathbf{n}(y)} \, \hat\phi(y)~dS(y)
			=
			\int_{\partial K} G(x,y) \,
			\dfrac{\partial\hat\phi}{\partial\mathbf{n}}(y) ~dS(y)
		\end{align}
		where $G(x,y) = -(2\pi)^{-1}\ln|x-y|$ is the fundamental
		solution of the Laplacian in $\RR^2$.
		Supposing that the boundary $\partial K$ is traversed
		counterclockwise,
		we let $\mathbf{t}$ denote the unit tangent vector
		and $\mathbf{n}$ denote the outward unit normal vector,
		so that the normal and tangential derivatives of $\phi$ and $\hat\phi$
		are related by
		\begin{align}
			\label{conjugate-tangent-normal}
			\dfrac{\partial\phi}{\partial\mathbf{n}}
			= \dfrac{\partial\hat\phi}{\partial\mathbf{t}}
			~,\quad
			\dfrac{\partial\hat\phi}{\partial\mathbf{n}}
			= - \dfrac{\partial\phi}{\partial\mathbf{t}}
			~,
		\end{align}
		from which the right-hand side in
		\eqref{harmonic-conjugate-integral-equation-incomplete}
		can be computed.
		Since $\hat\phi$ is unique only up to an additive constant,
		we impose $\int_K \hat\phi ~ ds = 0$, which we add to the left-hand
		side above to obtain
		\begin{align}
			\label{harmonic-conjugate-integral-equation}
			\frac12 \, \hat\phi(x) + \int_{\partial K}
			\left(
				\dfrac{\partial G(x,y)}{\partial\mathbf{n}(y)} + 1
			\right)
			\hat\phi(y)~dS(y)
			=
			-
			\int_{\partial K} G(x,y) \,
			\dfrac{\partial\phi}{\partial\mathbf{t}}(y) ~dS(y)
			~.
		\end{align}
		In practice, we solve this integral equation
		numerically for $\hat\phi$ on $\partial K$
		using a Nystr\"om method,
		where the right-hand side is computed using the tangential derivative
		of $\phi$, which is readily accessible from its trace
		$\phi|_{\partial K}$.
		Having obtained values of the harmonic conjugate $\hat\phi$ on
		the boundary $\partial K$, we may obtain its tangential
		derivative $\partial\hat\phi/\partial\mathbf{t}$ via
		numerical differentiation, which then yields values of
		the normal derivative $\partial\phi/\partial\mathbf{n}$.
		Indeed, if $x(t)$ is a sufficiently smooth
		parameterization of $\partial K$
		and we define $G(t) = \hat\phi(x(t))$, then
		\begin{align}
			G'(t)
			= \dfrac{\partial\hat\phi}{\partial\mathbf{t}}
			(x(t)) \, |x'(t)|
			= \dfrac{\partial\phi}{\partial\mathbf{n}}
			(x(t)) \, |x'(t)|
			~.
		\end{align}
		Since $G(t)$ is periodic, a natural choice to obtain $G'(t)$
		is to write a Fourier expansion
		$G(t) = \sum_{k=-\infty}^\infty \omega_k \, e^{\ii k t}$
		and obtain an approximation of $G'(t)$ by truncating the series
		\begin{align}
			\label{fft-derivative}
			G'(t) = \sum_{k=-\infty}^\infty
			\ii \,k \,\omega_k \, e^{\ii k t}~.
		\end{align}
		In practice, a Fast Fourier Transform (FFT) may be used on
		a discretization of $G(t)$, and an inverse FFT used on the coefficients
		$\ii k\omega_k$ to obtain the approximate values of $G'(t)$.

		Details of such calculations,
		including the case where $\partial K$ has corners,
		are discussed in \cite{Ovall2018}.

		%%%%%%%%%%%%%%%%%%%%%%%%%%%%%%%%%%%%%%%%%%%%%%%%%%%%%%%%%%%%%%%%%%%%%%%%
		\subsection{The $L^2$ Inner Product}

			Let $v = \phi + P$ and $w = \psi + Q$ be as above.
			We have the expansion
			\begin{align}
				\label{L2-expansion}
				\int_K v\,w~dx
				&= \int_K \phi\,\psi~dx
				+ \int_K Q \,\phi~dx
				+ \int_K P\,\psi~dx
				+ \int_K P\,Q~dx
				~.
			\end{align}
			Notice that the last integral can be computed with
			\eqref{integrate-polynomial},
			whereas the two middle integrals have the form
			\begin{align*}
				\int_K r \, \eta ~dx
			\end{align*}
			where $r$ is a polynomial and $\eta$ is harmonic.
			Using \eqref{anti-laplacian-polynomial},
			let $R$ be a polynomial such that
			\begin{align*}
				\Delta R = r
				~,
			\end{align*}
			then applying Green's Second Identity, we have
			\begin{align}
				\label{L2-harmonic-with-polynomial}
				\int_K r \, \eta ~dx
				=
				\int_K \eta \, \Delta R ~dx
				&= \int_{\partial K} \left[
					\eta \, \dfrac{\partial R}{\partial\mathbf{n}}
					- R \, \dfrac{\partial\eta}{\partial\mathbf{n}}
				\right]
				ds ~.
			\end{align}
			The remaining integral to be computed in \eqref{L2-expansion}
			is the $L^2$ inner product of the two harmonic functions
			$\phi$ and $\psi$.
			Toward this end,
			suppose that $\Phi$ is an anti-Laplacian of the harmonic function
			$\phi$, that is,
			\begin{align*}
				\Delta\Phi = \phi
				~.
			\end{align*}
			Then using Green's Second Identity again yields
			\begin{align}
				\label{l2-reduction}
				\int_K \phi \, \psi ~dx
				&= \int_K \psi \, \Delta\Phi ~dx
				= \int_{\partial K} \left[
					\psi \, \dfrac{\partial\Phi}{\partial\mathbf{n}}
					- \Phi \, \dfrac{\partial\psi}{\partial\mathbf{n}}
				\right]
				ds
				~.
			\end{align}
			The problem of determining such a $\Phi$,
			in particular its trace $\Phi|_{\partial K}$
			and normal derivative $\partial\Phi/\partial\mathbf{n}$,
			is addressed as follows.

		\subsection{Anti-Laplacians of Harmonic Functions}
			Notice that if $\Phi$ is an anti-Laplacian of $\phi$,
			it holds that $\Phi$ is \emph{biharmonic},
			that is, $\Delta^2\Phi=0$.
			A well-known fact (see, for example,
			pp. 269 of \cite{Garabedian1964})
			is that every biharmonic function is of the form
			\begin{align*}
				\Phi(x) = \Re \big[ \overline{z} f(z) + g(z) \big]
				~,
			\end{align*}
			where $f,g$ are some analytic functions,
			$\Re[z]$
			denotes the real part of $z\in \CC$,
			$\overline{z}$ denotes the complex conjugate,
			and we use the natural identitification of
			the complex plane with $\RR^2$
			via $x = (x_1, x_2) \mapsto z = x_1 + \ii x_2$.
			Since any anti-Laplacian of $\phi$ will suffice for our purposes,
			we will take $g = 0$ and write
			\begin{align*}
				\Phi(x) = \frac{x_1 \, \rho(x) + x_2 \, \hat\rho(x)}{4}
				~,
			\end{align*}
                        %% $\rho = 4(\Re f)$ and $\hat\rho=4\Im f$
			where $\rho=4(\Re f)$ is a harmonic function
                        and $\hat\rho=4(\Im f)$ is a harmonic
                        conjugate of $\rho$.  It follows from the
                        Cauchy-Riemann equations that
			\begin{align*}
				\phi = \Delta\Phi
				= \dfrac{\partial\rho}{\partial x_1}
				= \dfrac{\partial\hat\rho}{\partial x_2}
				~,
			\end{align*}
			and that the gradients of $\rho$ and $\hat\rho$ must take the form
			\begin{align*}
				\nabla\rho =
				\begin{pmatrix}
					\phi \\ - \hat\phi
				\end{pmatrix}
				~,
				\quad
				\nabla\hat\rho =
				\begin{pmatrix}
					\,\,
					\hat\phi
					\;\; % weird spacing issue
					\\
					\phi
				\end{pmatrix}
				~,
			\end{align*}
			where $\hat\phi$ is a harmonic conjugate of $\phi$.
			\begin{remark}
				Note that the gradient of $\Phi$ is given by
				\begin{align*}
					\nabla\Phi(x)=
					\dfrac14
					\begin{pmatrix}
						\rho(x)
						\\
						\hat\rho(x)
					\end{pmatrix}
					+
					\dfrac14
					\begin{pmatrix}
						x_1 & x_2
						\\
						x_2 & -x_1
					\end{pmatrix}
					\begin{pmatrix}
						\phi(x)
						\\
						\hat\phi(x)
					\end{pmatrix}
				\end{align*}
				from which we may obtain the normal derivative
				$\partial\Phi/\partial\mathbf{n}$.
			\end{remark}

			\begin{remark}
				\label{uniqueness-rho-remark}
				For any fixed constants
				$a$ and $b$, we have that
				\begin{align*}
					\frac{x_1 (\rho(x)+a) + x_2(\hat\rho(x)+b)}{4}
				\end{align*}
				is also an anti-Laplacian of $\phi$, since $(a x_1 + b x_2)/4$
				is harmonic.
			\end{remark}

			In order to compute $\rho$ and $\hat\rho$, consider the following.
			For the sake of illustration,
			we assume that the boundary $\partial K$ is smooth,
			but a similar approach works when $\partial K$ is
			only piecewise smooth, using some minor modifications.
			Given the traces of $\phi$ and $\hat\phi$ on $\partial K$,
			we have access to the tangential derivative of $\rho$ via
			\begin{align*}
				\dfrac{\partial\rho}{\partial\mathbf{t}}
				=
				\begin{pmatrix}
					\phi \\ - \hat\phi
				\end{pmatrix}
				\cdot\mathbf{t}
				~.
			\end{align*}
			Given a sufficiently smooth
			parameterization $x(t):[0,2\pi]\to\partial K$
			of the boundary, we define $g:[0,2\pi]\to\RR$ by
			\begin{align*}
				g(t)
				= \dfrac{\partial\rho}{\partial\mathbf{t}}
				\big( x(t) \big)
				\, |x'(t)|
				~.
			\end{align*}
			By the Fundamental Theorem of Calculus,
			we may obtain an anti-derivative $G$ via
			\begin{align*}
				G(t) = \int_0^t  g(\tau)~d\tau
				= \int_0^t  \nabla\rho(x(\tau))\cdot x'(\tau)~d\tau
				= \rho(x(t))
				~.
			\end{align*}
			Note that $g$ is $2\pi$-periodic and admits a Fourier expansion
			\begin{align*}
				g(t) = \sum_{k=-\infty}^\infty \omega_k \, e^{\ii k t}
				~.
			\end{align*}
			As mentioned above,
			in practice we truncate this series and
			compute the Fourier coefficients $\omega_k$
			using an FFT.
			Integrating termwise yields
			\begin{align*}
				\rho(x(t))
				= G(t)
				= C + \omega_0 t +
				\sum_{\substack{k=-\infty\\k\neq0}}^\infty
				\frac{\omega_k}{\ii k} \, e^{\ii k t}
			\end{align*}
			for an arbitrary constant $C$.
			In light of Remark \ref{uniqueness-rho-remark},
			we may pick $C$ arbitrarily;
			for instance, choose $C=0$.
			Moreover, since $\rho(x(t))$ is $2\pi$-periodic, we also see that
			$\omega_0=0$.
			In the computational context, we apply an inverse FFT to the
			coefficients $-\ii \, \omega_k / k$ in order to obtain approximate
			values of $\rho$ on the boundary.
			We apply an analogous procedure to obtain values of $\hat\rho$
			on the boundary, using the tangential derivative data
			\begin{align*}
				\dfrac{\partial\hat\rho}{\partial\mathbf{t}}
				=
				\begin{pmatrix}
					\,\,
					\hat\phi
					\;\; % weird spacing issue
					\\
					\phi
				\end{pmatrix}
				\cdot\mathbf{t}
			\end{align*}
			and computing an anti-derivative of
			\begin{align*}
				\hat g(t) = \dfrac{\partial\hat\rho}{\partial\mathbf{t}}
				\bigl(x(t)\bigr) \, |x'(t)|
				~.
			\end{align*}

	%%%%%%%%%%%%%%%%%%%%%%%%%%%%%%%%%%%%%%%%%%%%%%%%%%%%%%%%%%%%%%%%%%%%%%%%%%%%
	\subsection{Piecewise Smooth Boundaries}

		In our discussion so far, we have assumed the cell boundary
		$\partial K$ to be smooth, but here we will briefly address how
		the calculations described above can be modified when
		$\partial K$ has one or more corners.
		Elaboration on these modifications can be found in
		\cite{Ovall2018}.

		Each edge of the mesh cell $K$ is discretized into $2n+1$ points,
		including the endpoints, so that the boundary is discretized into
		$N = 2n \times (\#~\text{edges of}~K)$ points,
		with redundant endpoints being neglected.
		When an edge $e \subseteq \partial K$ is a $C^2$ smooth closed
		contour, the boundary points are assumed to be sampled according a
		\emph{strongly regular parameterization} $x(t)$ of $e$
		(i.e. $|x'(t)| \geq \delta$ for all $x(t)\in e$ and for some
		fixed $\delta >0$).
		In the case where $e$ terminates at a corner, we employ a
		\emph{Kress reparameterization}, defined as follows
		(cf. \cite{Kress1990}).
		Suppose that $x(t)$ is a strongly regular parameterization of $e$ for
		$t\in[0,2\pi]$,
		then define $\tilde x(u) = x(\tau(u))$ using
		\begin{align*}
			\tau(u) = \frac{2\pi[c(u)]^\sigma}
			{[c(u)]^\sigma + [1 - c(u)]^\sigma}~,
			\quad
			c(u) = \left(\frac12 -\frac 1\sigma\right)
			\bigg(\frac{u}{\pi} - 1\bigg)^2 +
			\frac1\sigma\bigg(\frac{u}{\pi} - 1\bigg) + \frac12
			~,\quad
			u \in [0, 2\pi]
		\end{align*}
		where the \emph{Kress parameter} $\sigma \geq 2$ is fixed.
		The Kress reparameterization is not regular, with $\tilde x'(u)$
		vanishing at the endpoints.
		Indeed, $\tau'(u)$ has roots at $0$ and $2\pi$ of order
		$\sigma - 1$, which leads to heavy sampling of the boundary
		near corners.
		This effect is amplified for larger values of $\sigma$.

		Recall that whenever $\eta$ has a sufficiently smooth Dirichlet trace,
		we can compute the \emph{weighted tangential derivative}
		\begin{align*}
			\frac{d}{dt} \eta(x(t)) = \frac{\partial \eta}{\partial \mathbf{t}}
			(x(t)) \, |x'(t)|
		\end{align*}
		by using, for instance, the FFT-based approach described by
		\eqref{fft-derivative}.
		Replacing $x(t)$ with a Kress reparameterization leads to difficulty
		in recovering the values of the tangential derivative from the
		weighted tangential derivative.
		The same can be said for the \emph{weighted normal derivative}
		\begin{align*}
			\frac{\partial\eta}{\partial\mathbf{n}}(x(t)) \, |x'(t)|
			~.
		\end{align*}
		Notice, though, that for the sake of computing the boundary integral
		\begin{align*}
			\int_{\partial K} \omega \,
			\frac{\partial \eta}{\partial\mathbf{n}} ~ds
			=
			\int_{t_0}^{t_f} \omega(x(t)) \,
			\frac{\partial \eta}{\partial \mathbf{t}}(x(t)) \,
			|x'(t)|~dt
		\end{align*}
		the weighting term $|x'(t)|$ appears in the Jacobian anyway,
		so it is natural to keep the tangential and normal derivatives
		in their weighted form, inlcuding the case when using a Kress
		reparameterization.

		Note that, for the sake of effectively applying an FFT,
		we assume that the parameter $t$ is sampled at equispaced nodes
		$t_k = h k$, $h = \pi / n$, $0\leq j \leq 2n+1$,
		and likewise for the parameter $u$ when using a Kress
		reparameterization.

	%%%%%%%%%%%%%%%%%%%%%%%%%%%%%%%%%%%%%%%%%%%%%%%%%%%%%%%%%%%%%%%%%%%%%%%%%%%%
	\subsection{Summary of Simply Connected Case}

		Thus far, we have all the necessary tools to compute the
		goal integrals
		\eqref{goal-integral-1} and \eqref{goal-integral-2}
		in the case where $K$ is simply connected.
		It is worth reiterating that both of these volumetric integrals
		have been successfully reduced to contour integrals
		along the boundary $\partial K$, and there is no need for
		2-dimensional quadratures as \textit{all necessary computations occur
		only on $\partial K$}.

		We have the option, though, of obtaining interior values of
		$v\in V_p(K)$ as follows. Write $v = \phi + P$ as above,
		and determine a harmonic conjugate $\hat\phi$ of $\phi$.
		Then $f = \phi + \ii \hat \phi$ is an analytic function,
		and for any fixed interior point $z=x_1+\ii x_2\in K$ we have
		Cauchy's integral formula
		\begin{align}
			\label{cauchy}
			f(z)
			= \frac{1}{2\pi\ii} \oint_{\partial K}
			\frac{f(\zeta)}{\zeta-z} ~d\zeta
			~.
		\end{align}
		Furthermore, we can obtain interior values of $\nabla\phi$
		by observing that
		\begin{align*}
			f'
			= \dfrac{\partial\phi}{\partial x_1}
			- \ii \dfrac{\partial\phi}{\partial x_2}
			~,
			\quad
			f'(z)
			= \frac{1}{2\pi\ii} \oint_{\partial K}
			\frac{f(\zeta)}{(\zeta-z)^2} ~d\zeta
			~.
		\end{align*}
		Interior values of higher derivatives, such as the components of the
		Hessian, can be obtained in similar fashion if so desired.

		We conclude this section
		with a few remarks about computational complexity.
		Assume the boundary $\partial K$ is parameterized and then discretized
		using $N$ points.
		The Nystr\"om system resulting from
		\eqref{harmonic-conjugate-integral-equation}
		is dense, though well-conditioned,
                and simple linear solvers come with a computational cost
		$\mathcal O(N^3)$.
		Using more sophisticated methods, such as GMRES,
		make an improvement, but in general will never be better than
		$\mathcal O(N^2)$.
                Although even more sophisticated methods, such as
                those based on
                hierarchical matrices~\cite{Hackbusch2015} or
                hierarchical semiseparable matrices~\cite{Xia2010},
                can reduce the computational complexity even further,
                for relatively small problems such as those considered
                here, GMRES is sufficient.

		The FFT calls used for numerical differentiation have a computational
		cost of $\mathcal O(N\,\log N)$, and integration along $\partial K$
		using (using, say, the trapezoid rule) take $\mathcal O(N)$ operations.
		Operations on polynomials, such as computing anti-Laplacians,
		can be performed by manipulation of the coefficients and do not
		meaningfully contribute to the computational cost.
		So despite the many terms we have encountered in the expansion of the
		integrals \eqref{goal-integral-1} and \eqref{goal-integral-2},
		in practice these expansions are relatively cheap in comparison to
		the cost of obtaining the trace of the harmonic conjugate.
		Note that the latter computation need only happen once for each
		function $v\in V_p(K)$ considered.

		Additionally, we can use the notion of trigonometric interpolation
		to reduce computational cost even further,
		as was explored in \cite{Ovall2018}.
		With the boundary discretized into $N$ points, we can solve the
		the Nystr\"om system obtained from
		\eqref{harmonic-conjugate-integral-equation}
		as usual to obtain the harmonic conjugate $\hat\phi$.
		While performing numerical differentiation with FFT as proposed,
		we have the Fourier coefficients at our disposal, which allows
		for rapid interpolation to, say, $M = 2^m N$ points.
		We then compute the boundary integrals obtained from expanding
		\eqref{goal-integral-1} and \eqref{goal-integral-2} using
		standard 1D quadratures
		(e.g. the trapezoid rule, Simpson's rule, etc.)
		on the larger collection of $M$ points.
		The heuristics presented in \cite{Ovall2018} suggest that
		similar levels of accuracy are achieved as would be
		in the case where all $M$ sampled points are used
		for solving the Nystr\"om system.

		In the next two sections, we address how our approach can be
		modified to accomodate multiply connected mesh cells.

%%%%%%%%%%%%%%%%%%%%%%%%%%%%%%%%%%%%%%%%%%%%%%%%%%%%%%%%%%%%%%%%%%%%%%%%%%%%
%%%%%%%%%%%%%%%%%%%%%%%%%%%%%%%%%%%%%%%%%%%%%%%%%%%%%%%%%%%%%%%%%%%%%%%%%%%%
\section{Punctured Cells}
	\label{punctured-cells-section}

	We now consider the case with $K$ being multiply connected.
	That is, we take $K_0,K_1,\dots,K_m\subset\RR^2$ to be simply connected,
	open, bounded regions, such that:
	\begin{enumerate}
		\item for each $1\leq j \leq m$, we have that
			$\overline{K}_j$ is a proper subset
			of $K_0$---that is,	$\overline{K}_j\subset K_0$;
		\item for each $1\leq i < j \leq m$, the closures of $K_i$
			and $K_j$ are disjoint---that is,
			$\overline{K}_i\cap\overline{K}_j=\varnothing$.
	\end{enumerate}
	Additionally, we will require that
	for each $0\leq j \leq m$, the boundary $\partial K_j$
	is piecewise $C^2$ smooth without slits or cusps.
	We then take $K$ to be the region
	\begin{align*}
		K = K_0 \setminus \bigcup_{j=1}^m \overline{K}_j
		~.
	\end{align*}
	We refer to $K_j$ as the $j$th \emph{hole} (or \emph{puncture}) of $K$.
	We sometimes call $\partial K_0$ the \emph{outer boundary} of $K$,
	and $\partial K_j$ the $j$th \emph{inner boundary}.
	The outer boundary is assumed to be oriented counterclockwise,
	and the inner boundaries oriented clockwise,
	with the unit tangential vector $\mathbf{t}$,
	wherever it is defined, oriented accordingly.
	The outward unit normal $\mathbf{n}$ is therefore always a $\pi/2$
	clockwise rotation of $\mathbf{t}$.

	In the simply connected case, we made liberal use of the notion of
	harmonic conjugates. However, in multiply connected domains,
	a given harmonic function is not guaranteed to have a harmonic
	conjugate, e.g. $\ln|x|$ on an annulus centered at the origin.
	The following theorem, which is proved in \cite{Axler1986},
	for example, provides a very helpful characterization of which
	harmonic functions have a harmonic conjugate.

	\begin{theorem}
		[Logarithmic Conjugation Theorem]
		\label{LCT}
		For each of the $m$ holes of a multiply connected domain $K$,
		fix a point $\xi_j\in K_j$. Suppose that $\phi$ is a harmonic
		function on $K$. Then there are real constants $a_1,\dots,a_m$
		such that, for each $x\in K$,
		\begin{align*}
			\phi(x) = \psi(x) + \sum_{j=1}^m a_j \ln|x-\xi_j|
		\end{align*}
		where $\psi$ is the real part of an
		analytic function. In particular, $\psi$ has a harmonic conjugate
		$\hat\psi$.
	\end{theorem}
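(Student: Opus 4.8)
The plan is to recast the existence of a harmonic conjugate as the vanishing of finitely many \emph{period integrals} and then to use the logarithms precisely to kill those periods. First I would introduce the \emph{conjugate differential} of $\phi$, the $1$-form
\[
	\omega = -\frac{\partial\phi}{\partial x_2}\,dx_1 + \frac{\partial\phi}{\partial x_1}\,dx_2~.
\]
Because $\phi$ is harmonic, $d\omega = \Delta\phi\,dx_1\wedge dx_2 = 0$, so $\omega$ is closed. The key observation is that a function $\hat\phi$ on $K$ satisfies $d\hat\phi=\omega$ if and only if $\phi,\hat\phi$ obey the Cauchy--Riemann equations, i.e. $\hat\phi$ is a harmonic conjugate of $\phi$; and such a single-valued $\hat\phi$ exists on $K$ exactly when the line integral $\oint_\gamma\omega$ vanishes for every closed loop $\gamma\subset K$ (one then defines $\hat\phi$ by integrating $\omega$ from a fixed base point, path-independence being equivalent to the vanishing of all periods).

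Next I would exploit the topology of a planar domain with $m$ holes. For each $j$ I choose a simple closed loop $\gamma_j\subset K$ encircling the hole $K_j$ once (say counterclockwise) and none of the others; these loops generate the first homology of $K$, so every closed loop in $K$ is homologous to an integer combination $\sum_j n_j\gamma_j$. Since $\omega$ is closed, Stokes' theorem makes $\oint_\gamma\omega$ depend only on the homology class of $\gamma$, whence
\[
	\oint_\gamma\omega = \sum_{j=1}^m n_j\,c_j~,\qquad c_j := \oint_{\gamma_j}\omega~.
\]
Thus all periods of $\omega$ vanish precisely when $c_1=\dots=c_m=0$, and the real numbers $c_j$ are the only obstructions to be removed.

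The removal mechanism is the logarithm. For each fixed $\xi_j\in K_j$ the function $\ln|x-\xi_j|$ is harmonic on $K$, since its singularity lies in the hole, outside $K$; its conjugate differential is $d\theta_j$ with $\theta_j=\arg(x-\xi_j)$, and the winding-number computation gives $\oint_{\gamma_i}d\theta_j = 2\pi\,\delta_{ij}$ (recall $\gamma_i$ encircles only $\xi_i$). Setting $a_j := c_j/(2\pi)$ and
\[
	\psi := \phi - \sum_{j=1}^m a_j\,\ln|x-\xi_j|~,
\]
the conjugate differential of $\psi$ is $\omega - \sum_j a_j\,d\theta_j$, and its period around each $\gamma_i$ is $c_i - 2\pi a_i = 0$. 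By the first paragraph $\psi$ then admits a single-valued harmonic conjugate $\hat\psi$ on $K$, so $f=\psi+\ii\hat\psi$ is analytic and $\psi=\Re f$, which is the assertion; the $a_j$ are real because $\omega$ and the $\gamma_j$ are, and no uniqueness of the $a_j$ is needed for the statement.

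The step I expect to be the main obstacle is the topological input of the second paragraph: making rigorous that the hole-encircling loops $\gamma_j$ generate the relevant homology of $K$ and that the period of a closed form is a homology invariant, so that checking the $m$ integrals $c_j$ controls \emph{all} loops simultaneously. For a smoothly bounded planar region with $m$ holes this is classical, but it is the one ingredient that genuinely uses the global structure of $K$; everything else is a direct Cauchy--Riemann/Stokes computation or the explicit winding-number evaluation for $\ln|x-\xi_j|$.
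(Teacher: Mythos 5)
Your proof is correct. Note, however, that the paper does not prove Theorem \ref{LCT} at all: it is quoted as a known result with the proof deferred to the cited reference \cite{Axler1986}. Your argument is essentially the standard one from that reference, recast in real differential-form language: Axler works with the analytic derivative $f=\partial\phi/\partial x_1-\ii\,\partial\phi/\partial x_2$ and kills the periods $\oint_{\gamma_j}f\,dz$ by subtracting multiples of $(z-\zeta_j)^{-1}$, whereas you work with the conjugate differential $\omega=\Im(f\,dz)$ and kill its periods by subtracting multiples of $d\theta_j$. The two are equivalent (since $f\,dz=d\phi+\ii\,\omega$ and $\oint_\gamma d\phi=0$, the residues $\tfrac{1}{2\pi\ii}\oint_{\gamma_j}f\,dz=c_j/2\pi$ are automatically real, which is exactly your observation that the $a_j$ are real), and your period-killing mechanism is the same one the paper itself uses, in complex form, to prove its Proposition 3.2 decomposing $g=\psi+\ii\hat\psi$ into a part with an antiderivative plus rational terms. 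The one ingredient you rightly flag as the crux --- that the hole-encircling loops generate the homology of $K$ so that $m$ period conditions control all loops --- is also the step the paper glosses over in Proposition 3.2 by citing a decomposition-into-chains theorem from \cite{Lang1999}; invoking that same classical fact (or the homology invariance of periods of closed forms via Stokes) closes your argument completely.
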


	To simplify the notation in what is to come, it will be convenient to
	define
	\begin{align}
		\label{log-lambda}
		\lambda_j(x) = \ln|x-\xi_j|~,
		\quad x\in K~,
		\quad 1\leq j\leq m~.
	\end{align}
	In a minor notational shift from Section \ref{simply-connected-section},
	note that, in this section, we will reserve $\psi$ to represent a
	``conjugable part'' of a harmonic function $\phi$, rather than treat
	$\phi$ and $\psi$ as independent harmonic functions as we did in the
	previous section.

%%%%%%%%%%%%%%%%%%%%%%%%%%%%%%%%%%%%%%%%%%%%%%%%%%%%%%%%%%%%%%%%%%%%%%%%%%%%
%%%%%%%%%%%%%%%%%%%%%%%%%%%%%%%%%%%%%%%%%%%%%%%%%%%%%%%%%%%%%%%%%%%%%%%%%%%%
	\subsection{A Dirichlet-to-Neumann Map for Punctured Cells}

		Our present goal is to determine the coefficients
		$a_1,\dots,a_m$, as in the statement of Theorem \ref{LCT},
		given the trace of a harmonic function $\phi$.
		We will see that we simultaneously determine $\hat\psi$
		by solving an integral equation similar to
		\eqref{harmonic-conjugate-integral-equation},
		and thereby arrive at the Dirichlet-to-Neumann map
		\begin{align}
			\label{normal-derivative-multiply connected}
			\phi|_{\partial K} \mapsto
			\dfrac{\partial\phi}{\partial\mathbf{n}}
			=
			\dfrac{\partial\hat\psi}{\partial\mathbf{t}}
			+ \sum_{j=1}^m a_j \frac{\partial \lambda_j}{\partial\mathbf{n}}
			~.
		\end{align}

		Assume for now that the boundary $\partial K$ is $C^2$ smooth.
		The case with corners is handled with Kress reparameterization,
		as discussed in the previous section.
		Our current task is to generalize the technique described by
		\eqref{harmonic-conjugate-integral-equation}
		in the case where $K$ is multiply connected.
		An alternative approach to the method we discuss here
		is presented in \cite{Greenbaum1993},
		which is comparable to our method in terms of cost and accuracy when all
                boundary edges are smooth, but does not achieve
                similar levels of accuracy when corners are present.
%		although we found that its performance when generalized to
%		boundaries with corners was suboptimal without significant alteration.

		Let $\hat\psi$ denote a harmonic conjugate of $\psi$
		satisfying $\int_{\partial K} \hat\psi ~ ds = 0$.
		Just as in the simply connected case, we have
		\begin{align*}
			\frac12 \, \hat\psi(x) + \int_{\partial K}
			\left(
				\dfrac{\partial G(x,y)}{\partial\mathbf{n}(y)} + 1
			\right)
			\hat\psi(y)~dS(y)
			=
			\int_{\partial K} G(x,y) \,
			\dfrac{\partial\hat\psi}{\partial\mathbf{n}}(y) ~dS(y)
			~.
		\end{align*}
		Making the replacement
		\begin{align*}
			\dfrac{\partial\hat\psi}{\partial\mathbf{n}}
			=
			- \dfrac{\partial\psi}{\partial\mathbf{t}}
			=
			- \dfrac{\partial\phi}{\partial\mathbf{t}}
			+ \sum_{j=1}^m a_j \dfrac{\partial\lambda_j}{\partial\mathbf{t}}
		\end{align*}
		and rearranging yields
		\begin{align}
			\label{harmonic-conjugate-integral-equation-multiply connected-1}
			\begin{split}
				\frac12 \, \hat\psi(x) + \int_{\partial K}
				\left(
					\dfrac{\partial G(x,y)}{\partial\mathbf{n}(y)} + 1
				\right)
				\hat\psi(y)~dS(y)
				- \sum_{j=1}^m a_j \int_{\partial K} G(x,y) \,
				\dfrac{\partial\lambda_j}{\partial\mathbf{t}}(y) ~dS(y)
				\\[6pt]
				=
				- \int_{\partial K} G(x,y) \,
				\dfrac{\partial\phi}{\partial\mathbf{t}}(y) ~dS(y)
				~.
			\end{split}
		\end{align}
		This integral equation is underdetermined due to the $m$ additional
		degrees of freedom $a_1,\dots,a_m$ in contrast to
		\eqref{harmonic-conjugate-integral-equation}.
		To resolve this, we multiply both sides of
		$\phi = \psi + \sum_{j=1}^m a_j \lambda_j$ by the normal derivative
		$\partial\lambda_\ell/\partial\mathbf{n}$ and integrate over
		$\partial K$ to obtain
		\begin{align*}
			\int_{\partial K} \phi \,
			\dfrac{\partial\lambda_\ell}{\partial\mathbf{n}}~ds
			=
			\int_{\partial K} \psi \,
			\dfrac{\partial\lambda_\ell}{\partial\mathbf{n}}~ds
			+
			\sum_{j=1}^m a_j
			\int_{\partial K} \lambda_j \,
			\dfrac{\partial\lambda_\ell}{\partial\mathbf{n}}~ds
			~.
		\end{align*}
		Invoking Green's Second Identity and the Cauchy-Riemann equations yields
		\begin{align*}
			\int_{\partial K} \psi \,
			\dfrac{\partial\lambda_\ell}{\partial\mathbf{n}}~ds
			=
			\int_{\partial K} \lambda_\ell \,
			\dfrac{\partial\psi}{\partial\mathbf{n}}~ds
			=
			\int_{\partial K} \lambda_\ell \,
			\dfrac{\partial\hat\psi}{\partial\mathbf{t}}~ds
			~.
		\end{align*}
		To write this in a form more conducive to computation,
		observe that the Fundamental Theorem of Calculus for contour integrals
		implies that
		\begin{align*}
			\int_{\partial K}
			\dfrac{\partial(\hat\psi\,\lambda_\ell)}{\partial\mathbf{t}}~ds
			= 0
		\end{align*}
		since $\partial K$ consists of $m+1$ closed contours.
		From the Product Rule, we obtain
		\begin{align*}
			\int_{\partial K} \lambda_\ell \,
			\dfrac{\partial\hat\psi}{\partial\mathbf{t}}~ds
			=
			- \int_{\partial K} \hat\psi \,
			\dfrac{\partial\lambda_\ell}{\partial\mathbf{t}}~ds
			~.
		\end{align*}
		Therefore, $\hat\psi$ ought to satisfy
		\begin{align}
			\label{harmonic-conjugate-integral-equation-multiply connected-2}
			- \int_{\partial K} \hat\psi \,
			\dfrac{\partial\lambda_\ell}{\partial\mathbf{t}}~ds
			+ \sum_{j=1}^m a_j
			\int_{\partial K} \lambda_j \,
			\dfrac{\partial\lambda_\ell}{\partial\mathbf{n}}~ds
			=
			\int_{\partial K} \phi \,
			\dfrac{\partial\lambda_\ell}{\partial\mathbf{n}}~ds
			~,
			\quad 1 \leq \ell \leq m
			~.
		\end{align}
		In summary, we have obtained a system of equations
		\eqref{harmonic-conjugate-integral-equation-multiply connected-1} and
		\eqref{harmonic-conjugate-integral-equation-multiply connected-2}
		for determining the trace of $\hat\psi$ on $\partial K$.
		Discretizing the boundary into $N$ points, as we did for the
		simply connected case, this system of equations yields a
		square augmented Nystr\"om system in $N + m$ variables,
		which we may solve with the same techniques used for solving
		\eqref{harmonic-conjugate-integral-equation}.
		The case when $\partial K$ has corners can be handled using
		a Kress reparameterization, just as in the simply connected case.

%%%%%%%%%%%%%%%%%%%%%%%%%%%%%%%%%%%%%%%%%%%%%%%%%%%%%%%%%%%%%%%%%%%%%%%%%%%%
%%%%%%%%%%%%%%%%%%%%%%%%%%%%%%%%%%%%%%%%%%%%%%%%%%%%%%%%%%%%%%%%%%%%%%%%%%%%
	\subsection{Anti-Laplacians of Harmonic Functions on Punctured Cells}
		\label{anti-laplacian-section}

		Next, we wish to construct an anti-Laplacian of a harmonic function
		\begin{align*}
			\phi = \psi + \sum_{j=1}^m a_j \lambda_j
		\end{align*}
		as in the statement of Theorem \ref{LCT}.
		It is simple to verify that
		\begin{align*}
			\Lambda_j(x) = \frac14 |x - \xi_j|^2\big(\ln|x - \xi_j|-1\big)
		\end{align*}
		is an anti-Laplacian of $\lambda_j(x) = \ln|x - \xi_j|$,
		so if $\Psi$ is an anti-Laplacian of $\psi$, then we have that
		\begin{align}
			\label{anti-laplacian-general-form}
			\Phi = \Psi + \sum_{j=1}^m a_j \Lambda_j
		\end{align}
		is an anti-Laplacian of $\phi$.
		The normal derivative can be computed using
		\begin{align*}
			\nabla\Phi = \nabla\Psi + \sum_{j = 1}^m a_j \nabla\Lambda_j
			~,
			\quad
			\nabla \Lambda_j(x) =
			\frac14\bigl(2 \ln|x - \xi_j| - 1\bigr) \, (x - \xi_j)~.
		\end{align*}
		Analogous to the simply connected case,
		we might seek potentials $\rho, \hat\rho$ of the vector fields
		\begin{align*}
			\mathbf{F} =
			\begin{pmatrix}
				\psi\\-\hat\psi
			\end{pmatrix}
			~,
			\quad
			\hat{\mathbf{F}} =
			\begin{pmatrix}
				\,\,
				\hat\psi
				\;\;
				\\
				\psi
			\end{pmatrix}
			~.
		\end{align*}
		While $\mathbf{F}$ and $\hat{\mathbf{F}}$ both have vanishing curls,
		this is not sufficient to guarantee that they are both conservative
		on a multiply connnected domain---a simple counterexample being
		\begin{align*}
			\psi(x) = \frac{x_1}{|x|^2}~,
			\quad
			\hat\psi(x) = - \frac{x_2}{|x|^2}~,
		\end{align*}
		taken on a circular annulus centered at the origin.

		An elementary observation from complex analysis is that
		an analytic function $g = \psi + \ii \hat\psi$ has an
		antiderivative if and only if
		$\mathbf{F} = (\psi, -\hat\psi)$ and
		$\hat{\mathbf{F}} = (\hat\psi, \psi)$ are conservative
		vector fields.
		Indeed, $G = \rho + \ii\hat\rho$ satisfies $G' = g$
		for $\nabla\rho = \mathbf{F}$ and
		$\nabla\hat\rho = \hat{\mathbf{F}}$.
		This simple fact inspires us to decompose $g$ as
		$g_0 + g_1$, where $g_0$ has an antiderivative
		and the real part of $g_1$ has an anti-Laplacian which can be
		computed a priori.
		The following proposition reveals such a decomposition.
		The proof is inspired by that given in \cite{Axler1986}
		and is unlikely to surprise a reader familiar with
		elementary complex analysis,
		but we include it for the sake of completeness.

		\begin{proposition}
			Let $g = \psi + \ii \hat\psi$ be analytic on a multiply
			connected domain $K$.
			Let $\zeta_j \in K_j$
			denote a point fixed in the $j$th hole of $K$.
			Then there are complex constants $\alpha_j\in \CC$ such that
			\begin{align*}
				g(z) = g_0(z) + \sum_{j = 1}^m
				\frac{\alpha_j}{z - \zeta_j}~,
			\end{align*}
			where $g_0$ has an antiderivative.
		\end{proposition}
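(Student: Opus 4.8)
The plan is to invoke the classical criterion from complex analysis that an analytic function on an open set $K\subseteq\CC$ admits a single-valued antiderivative if and only if its integral around every closed contour in $K$ vanishes. For a bounded planar domain with $m$ holes, the first homology of $K$ is generated by $m$ loops $\gamma_1,\dots,\gamma_m$, where $\gamma_\ell$ is a simple closed contour in $K$ encircling the $\ell$th hole $K_\ell$ once counterclockwise and encircling no other hole. Consequently it suffices to choose the constants $\alpha_j$ so that the corrected function $g_0$ has vanishing \emph{period} $\oint_{\gamma_\ell} g_0\,dz$ for each $1\leq\ell\leq m$; the vanishing of these $m$ periods will then force the integral of $g_0$ around an arbitrary closed contour to vanish, delivering the antiderivative.

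First I would record the periods of $g$ itself, writing $p_\ell=\oint_{\gamma_\ell} g(z)\,dz\in\CC$. Next I would compute the periods of the rational correction terms. Because each $\zeta_j$ lies in a hole $K_j$, and hence $\zeta_j\notin K$, every function $1/(z-\zeta_j)$ is analytic on $K$, so $g_0$ is automatically analytic there. Since $\gamma_\ell$ winds once around $\zeta_j$ when $\ell=j$ and not at all when $\ell\neq j$, Cauchy's integral formula gives
\begin{align*}
\oint_{\gamma_\ell} \frac{dz}{z-\zeta_j} = 2\pi\ii\,\delta_{\ell j}~.
\end{align*}
Setting $g_0(z)=g(z)-\sum_{j=1}^m \alpha_j/(z-\zeta_j)$ and integrating termwise yields $\oint_{\gamma_\ell} g_0\,dz = p_\ell - 2\pi\ii\,\alpha_\ell$, so the natural choice is $\alpha_\ell = p_\ell/(2\pi\ii)$, which annihilates each of the $m$ generating periods of $g_0$.

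With all $m$ periods killed, I would appeal to the topological fact above to conclude that $g_0$ integrates to zero around every closed contour in $K$, whence $G(z)=\int_{z_\ast}^z g_0(w)\,dw$ is well defined independent of the path taken in $K$, analytic, and satisfies $G'=g_0$. This exhibits the antiderivative and completes the argument, with the $\alpha_j$ given explicitly as $p_j/(2\pi\ii)$.

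I expect the only genuinely nonroutine ingredient to be the justification that vanishing of the periods around $\gamma_1,\dots,\gamma_m$ forces vanishing of the period around \emph{every} closed contour in $K$. This is precisely the statement that the classes $[\gamma_1],\dots,[\gamma_m]$ generate the first homology of the $m$-holed domain $K$; it is standard but is the conceptual crux, since once it is granted the remainder reduces to the elementary residue computation displayed above together with the construction of $G$ by path integration.
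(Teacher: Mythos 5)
Your argument is correct and follows essentially the same route as the paper: both choose $\alpha_j$ as $(2\pi\ii)^{-1}$ times the period of $g$ around the $j$th hole (your counterclockwise convention matches the paper's clockwise integral with its minus sign), verify that this kills the periods of $g_0$ around the generating loops, and then reduce an arbitrary closed contour to a chain of such loops to conclude that all periods vanish and an antiderivative exists by path integration. The homology/closed-chain fact you flag as the crux is exactly what the paper invokes as well (citing Lang).
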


		\begin{proof}
			\label{decompose-psi-prop}
			For each $1\leq j\leq m$, let
			\begin{align*}
				% \alpha_j = -\frac{1}{2\pi\ii}\int_{\partial K_j} g~dz
				\alpha_j = -\frac{1}{2\pi\ii}
				\varointclockwise_{\partial K_j} g~dz
			\end{align*}
			where $\partial K_j$ is the boundary of the $j$th hole
			traversed clockwise, and define
			\begin{align*}
				g_0(z) := g(z) - \sum_{j = 1}^m
				\frac{\alpha_j}{z - \zeta_j}
				~.
			\end{align*}
			We wish to show that $g_0$ has an antiderivative,
			i.e. there is an analytic function $G_0$ for which $G_0' = g_0$.
			It will suffice to show that
			$\oint_\gamma g_0 ~dz = 0$
			for any closed contour $\gamma$ in $K$.
			To see why, fix any point $z_0 \in K$ and define
			\begin{align*}
				G_0(z) = \int_{\gamma(z_0, z)} g_0(\zeta)~d\zeta
			\end{align*}
			where $\gamma(z_0, z)$ is any contour starting at $z_0$ and
			terminating at $z \in K$.
			Since this integral would be path independent,
			it would hold that $G_0$ is well defined and $G_0' = g_0$.

			Let $\gamma$ be a closed contour in $K$.
			If $\gamma$ is homotopic to a point, then
			\begin{align*}
				\oint_\gamma g_0 ~ dz
				= \oint_\gamma g ~dz - \sum_{j = 1}^m
				\oint_\gamma \frac{\alpha_j}{z - \zeta_j} ~dz
				= 0
			\end{align*}
			holds because $g$ and $(z - \zeta_j)^{-1}$ are
			% analytic in any open set of $K$.
			analytic in simply connected open subset of $K$.
			If $\gamma$ is homotopic to $\partial K_\ell$,
			then by the Deformation Theorem we have
			\begin{align*}
				\oint_\gamma g_0 ~ dz
				&= \oint_{\partial K_\ell} g_0 ~dz
				\\
				&= \oint_{\partial K_\ell} g ~dz - \sum_{j = 1}^m
				\oint_{\partial K_\ell} \frac{\alpha_j}{z - \zeta_j} ~dz
				\\
				&= -2\pi\ii\alpha_\ell -
				\oint_{\partial K_\ell} \frac{\alpha_\ell}{z - \zeta_\ell} ~dz
				= 0
				~.
			\end{align*}
			Note that the same conclusion holds when $\gamma$
			is oriented opposite to $\partial K_j$.
			Finally, if $\gamma$ is any closed contour in $K$
			that is not homotopic to a point,
			it holds that $\gamma$ can be decomposed as
			a closed chain (cf. Theorem 2.4 in \cite{Lang1999})
			\begin{align*}
				\gamma \sim m_1\gamma_{\ell_1} + \cdots + m_n\gamma_{\ell_n}~,
				\quad n\leq m
			\end{align*}
			with
			$\gamma_{\ell_j}$ being homotopic to $\partial K_j$
			and $m_j$ being the winding number of $\gamma$
			with respect to $\zeta_j$.
			Then
			\begin{align*}
				\oint_\gamma g_0 ~ dz
				= \sum_{j = 1}^n m_j \oint_{\gamma_{\ell_j}} g_0 ~dz
				= 0
			\end{align*}
			holds by the previous conclusion.
			Therefore $\int_\gamma g_0~dz=0$ for any closed contour
			in $\gamma$.
			As remarked above,
			this is sufficient to show that $g_0$ has an antiderivative.
		\end{proof}

		\begin{remark}
			\label{compute-bc-remark}
			For $\alpha_j = b_j + \ii c_j$ in the proof above, we have
			\begin{align}
				\label{rational-coefficients}
				b_j = -\frac{1}{2\pi} \int_{\partial K_j}
				(\hat\psi, \psi) \cdot \mathbf{t}~ds
				~,
				\quad
				c_j = \frac{1}{2\pi} \int_{\partial K_j}
				(\psi, -\hat\psi) \cdot \mathbf{t}~ds
			\end{align}
			are real-valued contour integrals around the boundary of
			the $j$th hole.
			Again, note that the inner boundary $\partial K_j$
			is taken to be oriented clockwise,
			with $\mathbf{t}$ behaving accordingly.
		\end{remark}

		Using the above results, we may decompose $g = \psi + \ii \hat\psi$
		into one part that has an antiderivative
		\begin{align*}
			g_0 = \psi_0 + \ii \hat\psi_0
			~,
		\end{align*}
		and another part that is a linear combination of rational functions
		\begin{align*}
			\sum_{j = 1}^m \frac{\alpha_j}{z - \zeta_j}
			& =
			\sum_{j = 1}^m (b_j \mu_j - c_j \hat\mu_j)
			+ \ii
			\sum_{j = 1}^m (c_j \mu_j + b_j \hat\mu_j)
			~,
			\quad
			\begin{pmatrix}
				\mu_j \\ - \hat\mu_j
			\end{pmatrix}
			=
			\nabla\ln|x- \xi_j|
			~,
		\end{align*}
		where we use $\zeta_j = \xi_j \cdot (1,\ii)$,
		with $\xi_j$ being chosen when Theorem \ref{LCT} is applied.
		Easy calculations verify that
		\begin{align*}
			M_j(x; b_j, c_j) &=
			\frac12 (b_j, c_j) \cdot (x - \xi_j)\ln|x - \xi_j|
		\end{align*}
		satisfies $\Delta M_j = b_j \mu_j - c_j \hat\mu_j$,
		and whose normal derivative can be directly obtained from
		\begin{align*}
			\nabla M_j(x; b_j, c_j)
			=
			\frac12 (b_j \,\mu_j - c_j \, \hat\mu_j) \; (x - \xi_j)
			+\frac12 \ln|x - \xi_j| \; (b_j, c_j)
			~.
		\end{align*}

		Suppose that we have computed $b_j,c_j,~1\leq j \leq m$,
		using Remark \ref{compute-bc-remark}
		and thereby obtained
		$g_0 = \psi_0 + \ii\hat\psi_0$ as in Proposition
		\ref{decompose-psi-prop}.
		Since $g_0$ has an antiderivative, we have that the vector fields
		\begin{align*}
			\mathbf{F}_0 =
			\begin{pmatrix}
				\psi_0 \\ -\hat\psi_0
			\end{pmatrix}
			~, \quad
			\hat{\mathbf{F}}_0 =
			\begin{pmatrix}
				\hat\psi_0 \\ \psi_0
			\end{pmatrix}
		\end{align*}
		are conservative.
		Let $\rho_0,\hat\rho_0$ be their corresponding potentials,
		then it follows from the Cauchy-Riemann equations that
		$\hat\rho_0$ is a harmonic conjugate of $\rho_0$,
		and their Neumann data is supplied with
		\begin{align*}
			\dfrac{\partial\rho_0}{\partial\mathbf{n}}
			= \mathbf{F}_0 \cdot \mathbf{n}
			~,
			\quad
			\dfrac{\partial\hat\rho_0}{\partial\mathbf{n}}
			= \hat{\mathbf{F}}_0 \cdot \mathbf{n}
			~.
		\end{align*}
		The solution to the Neumann problem $\Delta\rho_0=0$,
		$\nabla\rho_0\cdot\mathbf{n}=\mathbf{F}_0\cdot\mathbf{n}$
		is unique up to an additive constant, and similarly for
		$\hat\rho_0$.
		Given the conclusion of Remark \ref{uniqueness-rho-remark},
		we may fix these constants arbitrarily,
		and we make the choice to impose
		\begin{align*}
			\int_{\partial K} \rho_0 ~ds = 0
			~,
			\quad
			\int_{\partial K} \hat\rho_0 ~ds = 0
			~.
		\end{align*}
		Using the same techniques used to solve
		\eqref{harmonic-conjugate-integral-equation},
		we may determine the traces of $\rho, \hat\rho$
		by solving
		\begin{align}
			\label{rho-integral-eqn}
			\frac12 \, \rho_0(x) + \int_{\partial K}
			\left(
				\dfrac{\partial G(x,y)}{\partial\mathbf{n}(y)} + 1
			\right)
			\rho_0(y)~dS(y)
			&=
			\int_{\partial K} G(x,y) \,
			\mathbf{F}_0(y) \cdot \mathbf{n}(y) ~dS(y)
			~,
			\\[6pt]
			\label{rho-hat-integral-eqn}
			\frac12 \, \hat\rho_0(x) + \int_{\partial K}
			\left(
				\dfrac{\partial G(x,y)}{\partial\mathbf{n}(y)} + 1
			\right)
			\hat\rho_0(y)~dS(y)
			&=
			\int_{\partial K} G(x,y) \,
			\hat{\mathbf{F}}_0(y) \cdot \mathbf{n}(y) ~dS(y)
			~.
		\end{align}
		In summary, we have the following recipe to determine an
		anti-Laplacian $\Phi$ of a given harmonic function $\phi$ on a
		multiply connected domain $K$:
		\begin{enumerate}
			\item Write $\phi = \psi + \sum_j a_j \lambda_j$ as in Theorem
			\ref{LCT}, and determine $\hat\psi$ and $a_1,\dots,a_m$ using
			\eqref{harmonic-conjugate-integral-equation-multiply connected-1}
			and
			\eqref{harmonic-conjugate-integral-equation-multiply connected-2}.
			\item Determine $b_1,\dots,b_m,c_1,\dots,c_m$ by computing
			\eqref{rational-coefficients}.
			\item Set
			\begin{align*}
				\psi_0 = \psi - \sum_{j = 1}^m (b_j \mu_j - c_j \hat\mu_j)
				~, \quad
				\hat\psi_0 = \hat\psi -
				\sum_{j = 1}^m (c_j \mu_j + b_j \hat\mu_j)
				~.
			\end{align*}
			\item Solve \eqref{rho-integral-eqn} and
			\eqref{rho-hat-integral-eqn} using
			$\mathbf{F}_0 = (\psi_0, -\hat\psi_0)$ and
			$\hat{\mathbf{F}}_0 = (\hat\psi_0, \psi_0)$.
			\item Set
			\begin{align*}
				\Phi(x) &=
				\frac14\big(x_1\rho_0(x) + x_2\hat\rho_0(x)\big)
				+ \sum_{j = 1}^m M_j(x; b_j, c_j)
				+ \sum_{j = 1}^m a_j\Lambda_j(x)
				~.
			\end{align*}
		\end{enumerate}

	\subsection{Summary of Multiply Connected Case}

		The strategies outlined in Section \ref{simply-connected-section}
		for expanding the $H^1$ semi-inner product and
		$L^2$ inner product and reducing each term to integrals along the
		boundary $\partial K$ still hold in the multiply connected case.
		The two primary ways in which the computations have changed
		in the multiply connected case is
		(i) the Dirichlet-to-Neumann map for harmonic functions,
		and
		(ii) obtaining an anti-Laplacian of a harmonic function.

		We also note that the FFT-based method \eqref{fft-derivative}
		can still be used to obtain $\partial\hat\psi / \partial\mathbf{t}$
		from $\hat\psi|_{\partial K}$,
		\emph{but must be used on each component
		$\partial K_0,\partial K_1, \dots, \partial K_m$
		of the boundary separately}.
		An astute reader may also notice that similar concerns
		would thwart our attempts to obtain $\rho_0,\hat\rho_0$ with
		an FFT-based approach when $K$ is multiply connected,
		in contrast to the simply connected case.

		Should we wish to obtain interior values of
		\begin{align*}
			v(x) = \psi(x) + P(x) + \sum_{j = 1}^m a_j \lambda_j(x)~,
		\end{align*}
		which we emphasize is optional,
		we may proceed as follows.
		The values of $P$ and $\lambda_j$ may be obtained through
		direct computation.
		To see how to obtain interior values of $\psi$,
		consider the complex contour integral of
		$f = \psi + \ii \hat\psi$ along the outer boundary $\partial K_0$.
		Let $\gamma$ be the positively oriented boundary of a
		closed disk that lies in $K$ and is centered at a fixed
		$z \in K$.
		Then $\partial K_0$ can be decomposed into the chain
		\begin{align*}
			\partial K_0 \sim \gamma - \partial K_1 - \cdots - \partial K_m
		\end{align*}
		with the inner boundaries oriented clockwise.
		So integrating and applying Cauchy's integral formula to
		$\gamma$ yields
		\begin{align*}
			\oint_{\partial K_0} \frac{f(\zeta)}{\zeta - z}~d\zeta
			=
			2\pi\ii f(z)
			- \sum_{j = 1}^m
			\oint_{\partial K_j} \frac{f(\zeta)}{\zeta - z}~d\zeta
			~,
		\end{align*}
		and rearranging gives the familiar formula
		\begin{align*}
			f(z) = \frac{1}{2\pi\ii}\oint_{\partial K}
			\frac{f(\zeta)}{\zeta - z}~d\zeta
			~,\quad
			z \in K
			~,
		\end{align*}
		provided that we orient the boundary components properly.
		The same argument can be applied to show that the
		components of the gradient can be obtained in the interior
		by applying the above result to $f'$,
		and similarly for higher derivatives.

%%%%%%%%%%%%%%%%%%%%%%%%%%%%%%%%%%%%%%%%%%%%%%%%%%%%%%%%%%%%%%%%%%%%%%%%%%%%
%%%%%%%%%%%%%%%%%%%%%%%%%%%%%%%%%%%%%%%%%%%%%%%%%%%%%%%%%%%%%%%%%%%%%%%%%%%%
\section{Numerical Examples}
	\label{numerical-experiments-section}

	\begin{figure}
		\centering
		\includegraphics[height=0.3\textwidth]{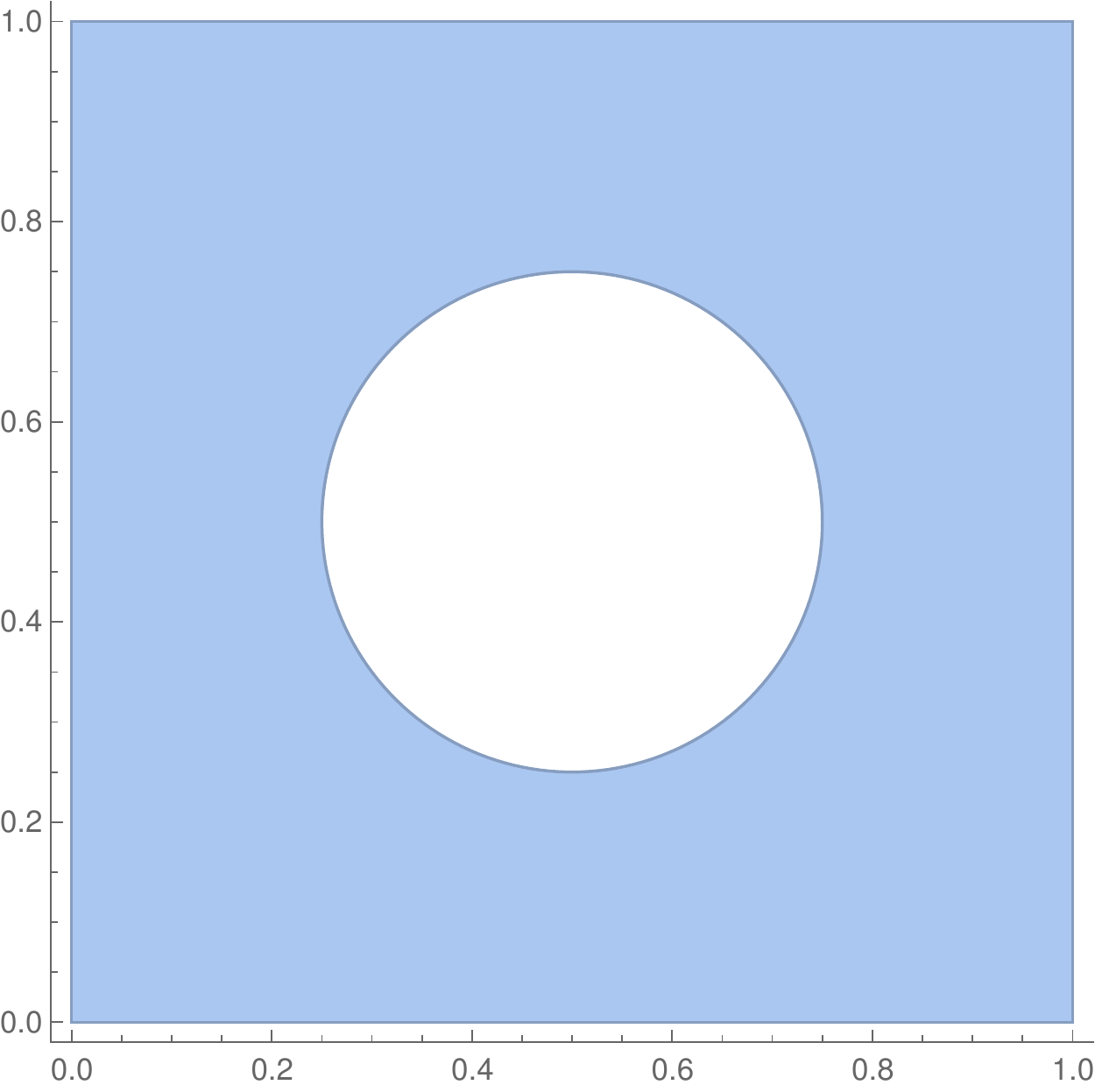}
		\hspace{5mm}
		\includegraphics[height=0.3\textwidth]{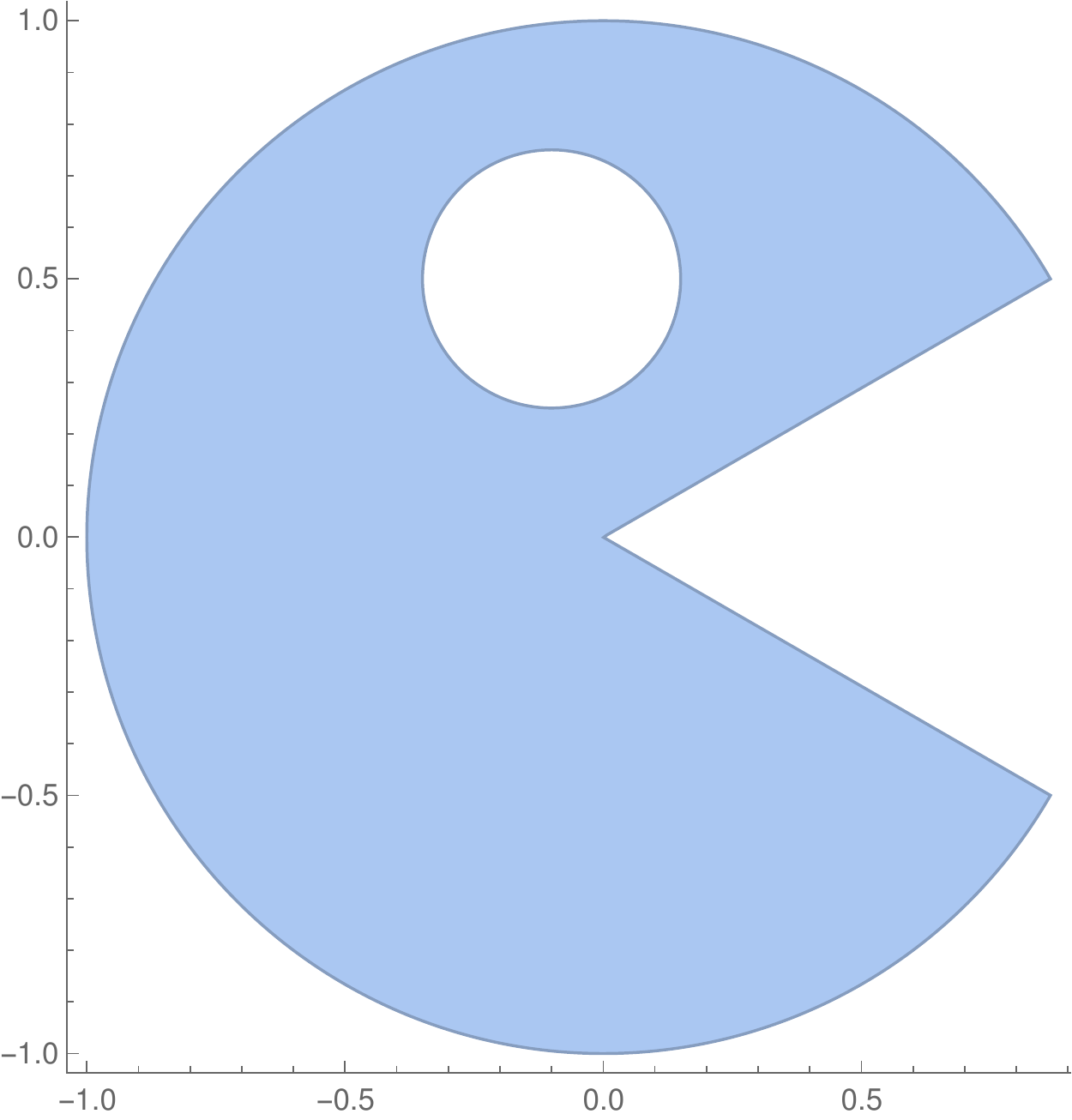}
		\hspace{5mm}
		\includegraphics[height=0.3\textwidth]{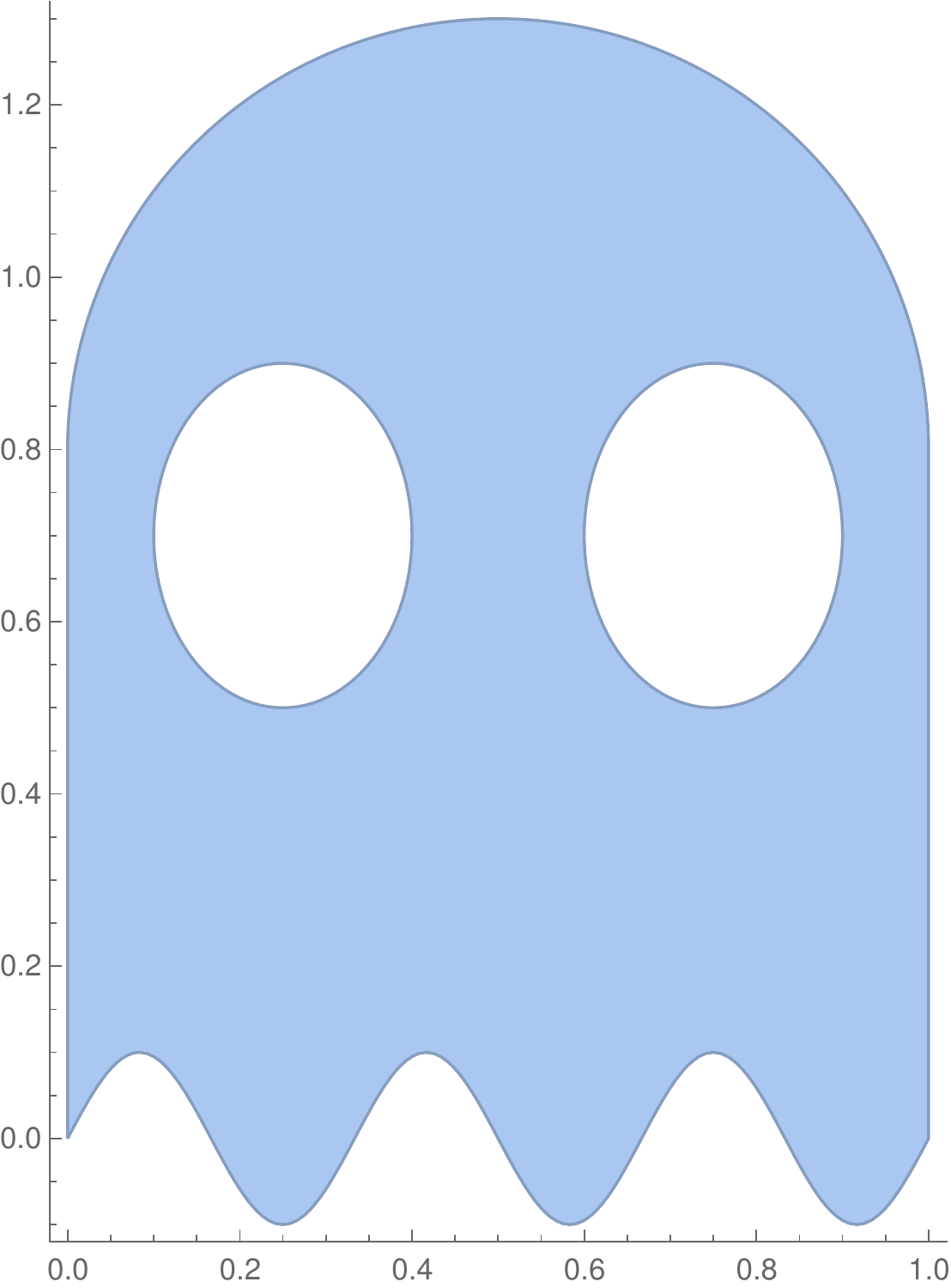}
		\caption{Punctured cells used for numerical experiments
		in Section \ref{numerical-experiments-section}.}
		\label{punctured-cells-figure}
	\end{figure}

	For each of the following examples, we pick explicit functions
	$v,w \in H^1(K)\cap C^2(K)$ of the form
	\begin{align*}
		v = \phi + P~,
		\quad
		w = \psi + Q~,
	\end{align*}
	where $\phi,\psi$ are harmonic functions and $P,Q$ are polynomials.
	While we will pick $v,w$ to be explicitly-defined, note that
	only the boundary traces $v|_{\partial K},w|_{\partial K}$
	and the coefficients of the polynomial Laplacians $\Delta v, \Delta w$
	are supplied as input for the computations.
	Using explicitly defined functions is convenient for convergence
	studies, but in practice the computations will work the same for
	implicitly-defined functions.

	Unless otherwise noted,
	we keep the Kress parameter $\sigma = 7$ fixed,
	as we observed that this value of the Kress parameter gave
	satisfactory results under a wide range of circumstances.
	Boundary integrals are evaluated by	applying the trapezoid rule.

	In each example, reference values for the $H^1$ and $L^2$
	(semi-)inner products
	were obtained with \textit{Wolfram Mathematica}.
	The mesh cell $K$ was defined using \texttt{ImplicitRegion[]},
	and volumetric integals were computed using \texttt{NIntegrate[]}.
	We remark that our implementation, albeit far from optimized,
	was significantly faster than \textit{Mathematica}
	for computing these kinds of integrals.
	(In fairness, we compute these integrals as boundary integrals,
	whereas it seems that \textit{Mathematica} implements
	general-purpose adaptive 2D quadrature over the volume,
	so perhaps the comparison in performance is unjustified.)
	For each reference value, we also give the error estimate
	that was provided by \textit{Mathematica}.

	Each of the numerical examples in this section is presented with
	Jupyter Notebook in the GitHub repository
	\begin{quote}
		\url{https://github.com/samreynoldsmath/PuncturedFEM}
	\end{quote}
	which also contains the Python source code implementing the numerical
	methods we have described in this work.

	\begin{example}
		[Punctured Square]
		\label{square-hole-example}

		\begin{table}
			\centering
			\caption{
				Errors in intermediate quantities for $v$ on
				the square with a circular hole in
				Example \ref{square-hole-example}:
				the logarithmic coefficient $a_1$,
				the trace of the harmonic conjugate $\hat\psi$,
				the weighted normal derivative wnd,
				and the trace of the anit-Laplacian $\Phi$.
				The latter three are given in the $L^2$ boundary norm.
			}
			\begin{tabular}{|c|c|c|c|c|}
				\hline
				$n$
				& $a_1$ error
				& $\hat\psi$ error
				& wnd error
				& $\Phi$ error
				\\
				\hline
4	&	1.7045e-03	&	3.5785e-02	&	2.8201e-01	&	8.3234e-03	\\
8	&	3.5531e-07	&	2.6597e-04	&	1.2855e-03	&	3.9429e-05	\\
16	&	1.0027e-09	&	1.1884e-06	&	3.7415e-06	&	3.3785e-07	\\
32	&	3.5905e-13	&	2.3095e-09	&	1.0434e-08	&	1.9430e-09	\\
64	&	1.8874e-14	&	1.6313e-12	&	6.4780e-11	&	7.0728e-12
				\\
				\hline
			\end{tabular}
			\label{square-hole-table}
		\end{table}

		Let $K_0 = (0,1) \times (0,1)$ be a unit square,
		and let $K_1 = \{x\in \RR^2 : |x - \xi| < 1/16\}$ be a disk
		of radius $1/4$ centered at $\xi = (1/2, 1/2)$.
		The cell under consideration is
		the square with the disk removed,
		$K = K_0 \setminus\overline{K}_1$,
		as depicted in the left-hand side of Figure
		\ref{punctured-cells-figure}.
		Define
		\begin{align*}
			v(x) = e^{x_1} \cos x_2 + \ln |x - \xi| +
			x_1^3 x_2 + x_1 x_2^3~.
		\end{align*}
		Notice that $v$ can be decomposed into
		\begin{align*}
			v = \phi + P~,
			\quad
			\phi(x) = e^{x_1} \cos x_2 + \ln |x - \xi|~,
			\quad
			P(x) = x_1^3 x_2 + x_1 x_2^3~,
		\end{align*}
		with $\phi$ being harmonic and the polynomial $P$ having the
		Laplacian
		\begin{align*}
			\Delta v(x) = \Delta P(x) = 12 x_1 x_2
			~.
		\end{align*}
		Furthermore, $\phi$ can be decomposed as
		\begin{align*}
			\phi(x) = \psi(x) + a_1 \ln |x - \xi_1|
		\end{align*}
		with $a_1 = 1$ and $\xi_1 = \xi = (1/2, 1/2)$.
		In Table \ref{square-hole-table},
		we report the errors in the computed approximations
		of $a_1$, $\hat\psi$, the weighted normal derivative (wnd) of
		$\phi$, and the trace of the anti-Laplacian $\Phi$.
		Since a harmonic conjugate $\hat\psi$ is unique only up to
		an additive constant, we compute the error as
		\begin{align*}
			\left(
				\int_{\partial K}
				(\hat\psi_\text{exact} - \hat\psi_\text{computed} + c)^2
				~ds
			\right)^{1/2}~,
		\end{align*}
		where $c$ is a constant minimizing the $L^2(\partial K)$ distance
		between the traces of
		$\hat\psi_\text{exact}$ and $\hat\psi_\text{computed}$,
		namely
		\begin{align*}
			c = -\frac{1}{|\partial K|}\int_{\partial K}
			(\hat\psi_\text{exact} - \hat\psi_\text{computed})
			~ds
			~.
		\end{align*}
		In general, an anti-Laplacian $\Phi$ is unique
		only up to the addition of a harmonic function, which is much
		less restrictive.
		However, we see from Remark \ref{uniqueness-rho-remark}
		that two different anti-Laplacians computed using
		the techniques described will differ by the addition of a linear
		function $c_1 x_1 + c_2 x_2$ for some constants $c_1, c_2$.
		It follows the that difference between $\Phi_\text{computed}$ and
		\begin{align*}
			\Phi_\text{exact}(x) =
			\frac14 e^{x_1}
			\big(x_1 \cos x_2 + x_2 \sin x_2\big)
			+ \frac14 |x - \xi|^2 \big(\ln|x - \xi|-1 \big)
		\end{align*}
		ought to be well-modeled by $c_1 x_1 + c_2 x_2$,
		and we choose to determine optimal constants $c_1,c_2$
		via least-squares.
		Upon doing so, we compute the error in $\Phi$ with
		\begin{align*}
			\left(
				\int_{\partial K}
				(\Phi_\text{exact} - \Phi_\text{computed}
				+ c_1 x_1 + c_2 x_2)^2
				~ds
			\right)^{1/2}~.
		\end{align*}
		In Table \ref{square-hole-table}, we list the absolute error
		in the logarithmic coefficient $a_1$,
		as well as the $L^2$ boundary norm of the errors in
		the harmonic conjugate trace $\hat\psi|_{\partial K}$,
		the weighted normal derivative of $\phi$,
		and the trace of the anti-Laplacian $\Phi$.
		We observe superlinear convergence in these quantities
		with respect to the boundary discretization parameter $n$.
		In Table \ref{h1-l2-errors-table},
		we provide the errors in the $H^1$ semi-inner product and $L^2$
		inner product of $v$ and $w$,
		where
		\begin{align*}
			w(x) = \frac{x_1 - 0.5}{(x_1 - 0.5)^2 + (y - 0.5)^2}
			+ x_1^3 + x_1 x_2^2
			~.
		\end{align*}
		The reference values
		\begin{align*}
			\int_K \nabla v \cdot \nabla w ~ dx
			&\approx 4.46481780319135
			\pm 9.9241 \times 10^{-15}
			~,
			\\
			\int_K v \, w ~dx
			&\approx 1.39484950156676
			\pm 2.7256 \times 10^{-16}
		\end{align*}
		were obtained with \textit{Mathematica}, as noted above.
		Notice that the convergence trends in these quantities parallels those
		of the intermediate quantities found in
		Table \ref{square-hole-table}.

		\begin{table}
			\centering
			\caption{
				Absolute errors in
				the $H^1$ semi-inner product and $L^2$ inner product
				for the Punctured Square
				(Example \ref{square-hole-example}),
				Pac-Man (Example \ref{pacman-example}),
				and Ghost (Example \ref{ghost-example}).
			}
			\begin{tabular}{|c||c|c||c|c||c|c||}
				\hline
				& \multicolumn{2}{c||}{Punctured Square}
				& \multicolumn{2}{c||}{Pac-Man}
				& \multicolumn{2}{c||}{Ghost}
				\\
				\hline
				$n$
				& $H^1$ error
				& $L^2$ error
				& $H^1$ error
				& $L^2$ error
				& $H^1$ error
				& $L^2$ error
				\\
				\hline
4	&	1.5180e-02	&	3.4040e-03	&	7.2078e-02	&	2.1955e-02	&	2.4336e+00	&	5.9408e-03	\\
8	&	2.6758e-04	&	8.3812e-05	&	3.3022e-02	&	5.4798e-03	&	1.0269e-02	&	1.3086e-02	\\
16	&	8.4860e-07	&	3.8993e-08	&	1.2495e-03	&	1.0159e-04	&	1.5273e-03	&	1.3783e-04	\\
32	&	1.0860e-09	&	2.8398e-11	&	6.5683e-06	&	4.6050e-07	&	5.3219e-07	&	8.1747e-07	\\
64	&	9.5390e-13	&	1.1036e-13	&	4.6834e-08	&	2.1726e-09	&	1.5430e-11	&	4.6189e-11	\\
				\hline
			\end{tabular}
			\label{h1-l2-errors-table}
		\end{table}

	\end{example}

	\begin{example}
		[Pac-Man]
		\label{pacman-example}

		For our next example, we consider the Pac-Man domain
		$K = K_0 \setminus \overline{K}_1$, where
		$K_0$ is the sector of the unit circle centered at the origin
		for $\theta_0 < \theta < 2\pi - \theta_0$,
		$\theta_0 = \pi / 6$,
		and $K_1$ is a disk of radius $1/4$ centered at $(-1/10, 1/2)$.
		(See Figure \ref{punctured-cells-figure}, center.)
		The function
		\begin{align*}
			v(x) = r^\alpha \, \sin(\alpha \theta)
		\end{align*}
		specified in polar coordinates $(r,\theta)$ is harmonic everywhere
		except possibly the origin for any fixed $\alpha > 0$.
		For the choice $0 < \alpha < 1$, we have that the gradient of
		$v$ is unbounded near the origin; indeed,
		$|\nabla v| = \alpha r^{\alpha - 1}$.
		Noting that the boundary $\partial K$ intersects the origin,
		it follows that normal derivative of $v$ is also unbounded
		near the origin.
		To test whether our strategy is viable for such functions,
		we compute the $H^1$ seminorm and $L^2$ norm of $v$ for
		$\alpha = 1/2$.
		The results are given in Table \ref{h1-l2-errors-table},
		using the reference values
		\begin{align*}
			\int_K |\nabla v|^2~dx
			&\approx 1.20953682240855912
			\pm 2.3929 \times 10^{-18}
			~,\\
			\int_K v^2 ~dx
			&\approx 0.97793431492143971
			\pm 3.6199\times 10^{-19}
			~.
		\end{align*}
		Although convergence is still rapid in this case,
		it is less so than in the previous example,
		as may be expected when considering more challenging integrands,
		as we have here.
	\end{example}

	\begin{example}
		[Ghost]
		\label{ghost-example}

		Our final example demonstrates that our method works when
		$K$ has more than one puncture, as well as when
		the boundary has
		edges that are not line segments or circlular arcs.
		The lower edge of the Ghost is the sinusoid
		$x_2 = 0.1 \sin (6\pi x_1)$ for $0 < x_1 < 1$,
		the sides are vertical line segments, the upper boundary
		is a circular arc of radius $1/2$ centered at $(0.5, 0.8)$,
		and the inner boundaries are ellipses with
		$0.15$ and $0.2$ as the semi-minor and semi-major axes,
		respectively, with one centered at $(0.25, 0.7)$ and the other
		at $(0.75, 0.7)$.
		(See Figure \ref{punctured-cells-figure}, right.)
		The functions we choose to integrate are
		\begin{align*}
			v(x) &= \frac{x_1 - 0.25}{(x_1 - 0.25)^2 + (x_2 - 0.7)^2}
			+ x_1^3 x_2 + x_2^2
			~,
			\\[12pt]
			w(x) &= \ln\big[(x_1 - 0.75)^2 + (x_2 - 0.7)^2\big]
			+ x_1^2 x_2^2 - x_1 x_2^3
			~.
		\end{align*}
		Notice that these functions have singularities in the holes
		of $K$, one rational and the other logarithmic.
		In Table \ref{h1-l2-errors-table},
		we compare the computed
		$H^1$ and $L^2$ (semi-)inner products to the reference values
		\begin{align*}
			\int_K \nabla v \cdot \nabla w ~dx
			&\approx
			-6.311053612386
			\pm 3.6161 \times 10^{-12}
			~,\\
			\int_K v \, w ~dx
			&\approx -3.277578636852
			\pm 1.0856\times 10^{-13}
			~.
		\end{align*}
		We conjecture that for $n=4$, the error in the $H^1$ semi-inner
		product is significantly worse than in
		the other two examples because this level of boundary discretization
		is insufficient to fully capture the oscillatory behavior of the
		lower edge.

		Lastly, we demonstrate the ability to obtain interior values of
		$v$ and $\nabla v$ in the interior of $K$
		in Figure \ref{ghost-intvals-figure}.
		All computations used to generate these values used the
		boundary discretization parameter $n = 64$.
		Due to the factor(s) of $\zeta - z$ in the denominator
		of the integrand in Cauchy's integral formula, where $\zeta$ is
		a point on the boundary and $z$ is the point in the interior
		where we wish to evaluate $v$,
		notice that the error in evaluation is considerably greater
		when $z$ is near the boundary.
		For this reason, we choose to not to perform
		the evaluation if $|\zeta - z| < \varepsilon$
		is found to hold for some boundary point $\zeta \in \partial K$
		and a fixed $\epsilon > 0$.
		For this example, we chose $\varepsilon = 0.02$.

		\begin{figure}
			\centering
			\includegraphics[width=0.9\textwidth]{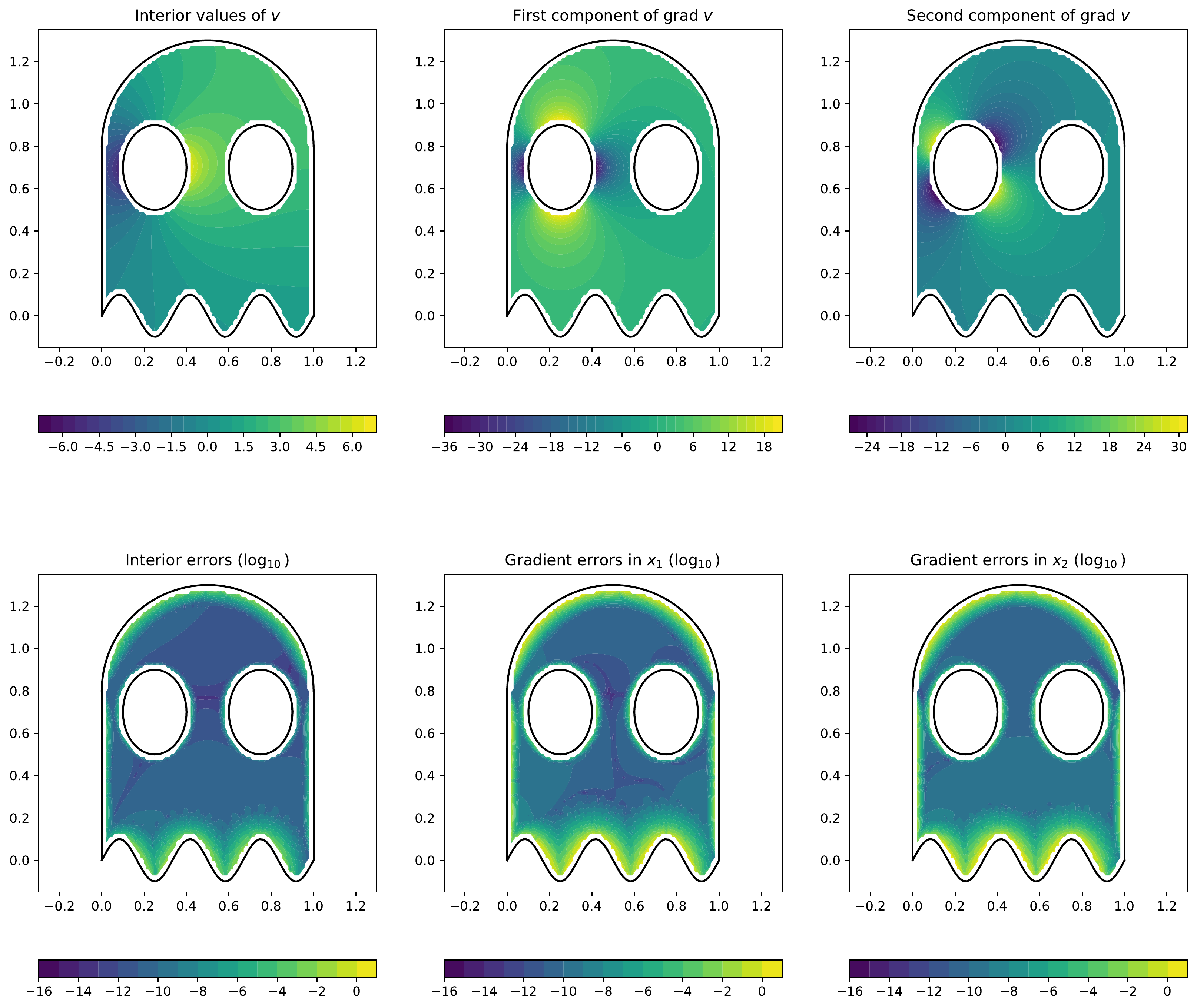}
			\caption{
				Interior values of $v$ and $\nabla v$ in
				Example \ref{ghost-example}.
				In the left column, we have the computed values of $v$
				on top, and the base 10 logarithm of the absolute error
				on bottom.
				This setup is repeated in the middle and right columns
				for the components of the gradient.
			}
			\label{ghost-intvals-figure}
		\end{figure}

	\end{example}

%%%%%%%%%%%%%%%%%%%%%%%%%%%%%%%%%%%%%%%%%%%%%%%%%%%%%%%%%%%%%%%%%%%%%%%%%%%%
%%%%%%%%%%%%%%%%%%%%%%%%%%%%%%%%%%%%%%%%%%%%%%%%%%%%%%%%%%%%%%%%%%%%%%%%%%%%
\section{Conclusion}
	\label{conclusion-section}

	% So long, farewell, auf Wiedersehen, goodnight!

	We have seen that, given implicitly-defined functions $v,w$ of the
	type that arise in a finite element setting, we can efficiently
	compute the $H^1$ semi-inner product and $L^2$ inner product of
	$v$ and $w$ over multiply connected curvilinear mesh cells.
	All of the necessary computations occur only on the boundary of
	mesh cells, although we have the option of obtaining interior
	values of these functions and their derivatives using quantities
	obtained in the course of these calculations.
	Two key computations needed for our approach are
	(i) a Dirichlet-to-Neumann map for harmonic functions, and
	(ii) finding the trace and normal derivative of an anti-Laplacian of a
	harmonic function.
	We have described how both of these computations may be feasiblely
	accomplished on planar curvilinear mesh cells with holes.
	Numerical examples demonstrate superlinear convergence
	with respect to the number of sampled boundary points.

%%%%%%%%%%%%%%%%%%%%%%%%%%%%%%%%%%%%%%%%%%%%%%%%%%%%%%%%%%%%%%%%%%%%%%%%%%%%
%%%%%%%%%%%%%%%%%%%%%%%%%%%%%%%%%%%%%%%%%%%%%%%%%%%%%%%%%%%%%%%%%%%%%%%%%%%%
\section*{Funding}
	This work was partially supported by the
	National Science Foundation through
	NSF grant DMS-2012285 and
	NSF RTG grant DMS-2136228.

%%%%%%%%%%%%%%%%%%%%%%%%%%%%%%%%%%%%%%%%%%%%%%%%%%%%%%%%%%%%%%%%%%%%%%%%%%%%
%%%%%%%%%%%%%%%%%%%%%%%%%%%%%%%%%%%%%%%%%%%%%%%%%%%%%%%%%%%%%%%%%%%%%%%%%%%%
\section*{Code availability}

	Python code used for the experiments in this manuscript is publicly
	available at
	\url{https://github.com/samreynoldsmath/PuncturedFEM}

%%%%%%%%%%%%%%%%%%%%%%%%%%%%%%%%%%%%%%%%%%%%%%%%%%%%%%%%%%%%%%%%%%%%%%%%%%%%
%%%%%%%%%%%%%%%%%%%%%%%%%%%%%%%%%%%%%%%%%%%%%%%%%%%%%%%%%%%%%%%%%%%%%%%%%%%%

\bibliography{titles}

\def\cprime{$'$} \def\ttilde{\raisebox{-1.5ex}{\Large\textasciitilde}}
\begin{thebibliography}{10}

\bibitem{Aldakheel2020}
F.~Aldakheel, B.~Hudobivnik, E.~Artioli, L.~Beir\~{a}o~da Veiga, and
  P.~Wriggers.
\newblock Curvilinear virtual elements for contact mechanics.
\newblock {\em Comput. Methods Appl. Mech. Engrg.}, 372:113394, 19, 2020.

\bibitem{Anand2020}
A.~Anand, J.~S. Ovall, S.~E. Reynolds, and S.~Wei{\ss}er.
\newblock Trefftz {F}inite {E}lements on {C}urvilinear {P}olygons.
\newblock {\em SIAM J. Sci. Comput.}, 42(2):A1289--A1316, 2020.

\bibitem{Antonietti2018}
P.~F. Antonietti, P.~Houston, and G.~Pennesi.
\newblock Fast numerical integration on polytopic meshes with applications to
  discontinuous {G}alerkin finite element methods.
\newblock {\em J. Sci. Comput.}, 77(3):1339--1370, 2018.

\bibitem{Axler1986}
S.~Axler.
\newblock Harmonic functions from a complex analysis viewpoint.
\newblock {\em Amer. Math. Monthly}, 93(4):246--258, 1986.

\bibitem{BeiraodaVeiga2020}
L.~Beir\~{a}o~da Veiga, F.~Brezzi, L.~D. Marini, and A.~Russo.
\newblock Polynomial preserving virtual elements with curved edges.
\newblock {\em Math. Models Methods Appl. Sci.}, 30(8):1555--1590, 2020.

\bibitem{BeiraodaVeiga2019}
L.~Beir\~{a}o~da Veiga, A.~Russo, and G.~Vacca.
\newblock The virtual element method with curved edges.
\newblock {\em ESAIM Math. Model. Numer. Anal.}, 53(2):375--404, 2019.

\bibitem{BeiraodaVeiga2013}
L.~Beir{\~a}o~da Veiga, F.~Brezzi, A.~Cangiani, G.~Manzini, L.~D. Marini, and
  A.~Russo.
\newblock Basic principles of virtual element methods.
\newblock {\em Math. Models Methods Appl. Sci.}, 23(1):199--214, 2013.

\bibitem{BeiraodaVeiga2014}
L.~Beir{\~a}o~da Veiga, F.~Brezzi, L.~D. Marini, and A.~Russo.
\newblock The hitchhiker's guide to the virtual element method.
\newblock {\em Math. Models Methods Appl. Sci.}, 24(8):1541--1573, 2014.

\bibitem{Brezzi2014}
F.~Brezzi and L.~D. Marini.
\newblock Virtual element and discontinuous {G}alerkin methods.
\newblock In {\em Recent developments in discontinuous {G}alerkin finite
  element methods for partial differential equations}, volume 157 of {\em IMA
  Vol. Math. Appl.}, pages 209--221. Springer, Cham, 2014.

\bibitem{Copeland2009}
D.~Copeland, U.~Langer, and D.~Pusch.
\newblock From the boundary element domain decomposition methods to local
  {T}refftz finite element methods on polyhedral meshes.
\newblock In {\em Domain decomposition methods in science and engineering
  {XVIII}}, volume~70 of {\em Lect. Notes Comput. Sci. Eng.}, pages 315--322.
  Springer, Berlin, 2009.

\bibitem{Dassi2021}
F.~Dassi, A.~Fumagalli, D.~Losapio, S.~Scial\`o, A.~Scotti, and G.~Vacca.
\newblock The mixed virtual element method on curved edges in two dimensions.
\newblock {\em Comput. Methods Appl. Mech. Engrg.}, 386:Paper No. 114098, 25,
  2021.

\bibitem{Garabedian1964}
P.~R. Garabedian.
\newblock {\em Partial Differential Equations}.
\newblock John Wiley \& Sons, New York, 1964.

\bibitem{Greenbaum1993}
A.~Greenbaum, L.~Greengard, and G.~B. McFadden.
\newblock Laplace's equation and the {D}irichlet-to-{N}eumann map in multiply
  connected domains.
\newblock {\em Journal of Computational Physics}, 105(1):267--278, 1993.

\bibitem{Hackbusch2015}
W.~Hackbusch.
\newblock {\em Hierarchical matrices: algorithms and analysis}, volume~49 of
  {\em Springer Series in Computational Mathematics}.
\newblock Springer, Heidelberg, 2015.

\bibitem{Hakula2022}
H.~Hakula.
\newblock Resolving boundary layers with harmonic extension finite elements.
\newblock {\em Mathematical and Computational Applications}, 27(4), 2022.

\bibitem{Hofreither2011}
C.~Hofreither.
\newblock {$L_2$} error estimates for a nonstandard finite element method on
  polyhedral meshes.
\newblock {\em J. Numer. Math.}, 19(1):27--39, 2011.

\bibitem{Hofreither2010}
C.~Hofreither, U.~Langer, and C.~Pechstein.
\newblock Analysis of a non-standard finite element method based on boundary
  integral operators.
\newblock {\em Electron. Trans. Numer. Anal.}, 37:413--436, 2010.

\bibitem{Hofreither2016}
C.~Hofreither, U.~Langer, and S.~Wei{\ss}er.
\newblock Convection-adapted {BEM}-based {FEM}.
\newblock {\em ZAMM Z. Angew. Math. Mech.}, 96(12):1467--1481, 2016.

\bibitem{Karachik2010}
V.~V. Karachik and N.~A. Antropova.
\newblock On the solution of a nonhomogeneous polyharmonic equation and the
  nonhomogeneous {H}elmholtz equation.
\newblock {\em Differ. Uravn.}, 46(3):384--395, 2010.

\bibitem{Kress1990}
R.~Kress.
\newblock A {N}ystr\"om method for boundary integral equations in domains with
  corners.
\newblock {\em Numer. Math.}, 58(2):145--161, 1990.

\bibitem{Lang1999}
S.~Lang.
\newblock {\em Complex analysis}, volume 103 of {\em Graduate Texts in
  Mathematics}.
\newblock Springer-Verlag, New York, fourth edition, 1999.

\bibitem{Ovall2018}
J.~S. Ovall and S.~E. Reynolds.
\newblock A high-order method for evaluating derivatives of harmonic functions
  in planar domains.
\newblock {\em SIAM J. Sci. Comput.}, 40(3):A1915--A1935, 2018.

\bibitem{Ovall2022}
J.~S. Ovall and S.~E. Reynolds.
\newblock Quadrature for implicitly-defined finite element functions on
  curvilinear polygons.
\newblock {\em Computers \& Mathematics with Applications}, 107:1--16, 2022.

\bibitem{Weisser2011}
S.~Wei{\ss}er.
\newblock Residual error estimate for bem-based fem on polygonal meshes.
\newblock {\em Numer. Math.}, 118:765--788, 2011.
\newblock 10.1007/s00211-011-0371-6.

\bibitem{Weisser2014}
S.~Wei{\ss}er.
\newblock Arbitrary order {T}refftz-like basis functions on polygonal meshes
  and realization in {BEM}-based {FEM}.
\newblock {\em Comput. Math. Appl.}, 67(7):1390--1406, 2014.

\bibitem{Weisser2017}
S.~Wei{\ss}er.
\newblock Residual based error estimate and quasi-interpolation on polygonal
  meshes for high order {BEM}-based {FEM}.
\newblock {\em Comput. Math. Appl.}, 73(2):187--202, 2017.

\bibitem{Weisser2019a}
S.~Wei{\ss}er.
\newblock Anisotropic polygonal and polyhedral discretizations in finite
  element analysis.
\newblock {\em ESAIM Math. Model. Numer. Anal.}, 53(2):475--501, 2019.

\bibitem{Weisser2019}
S.~Wei{\ss}er.
\newblock {\em BEM-based Finite Element Approaches on Polytopal Meshes}, volume
  130 of {\em Lecture Notes in Computational Science and Engineering}.
\newblock Springer International Publishing, 1 edition, 2019.

\bibitem{Weisser2018}
S.~Wei{\ss}er and T.~Wick.
\newblock The dual-weighted residual estimator realized on polygonal meshes.
\newblock {\em Comput. Methods Appl. Math.}, 18(4):753--776, 2018.

\bibitem{Wriggers2020}
P.~Wriggers, B.~Hudobivnik, and F.~Aldakheel.
\newblock A virtual element formulation for general element shapes.
\newblock {\em Comput. Mech.}, 66(4):963--977, 2020.

\bibitem{Xia2010}
J.~Xia, S.~Chandrasekaran, M.~Gu, and X.~S. Li.
\newblock Fast algorithms for hierarchically semiseparable matrices.
\newblock {\em Numer. Linear Algebra Appl.}, 17(6):953--976, 2010.

\end{thebibliography}
\bibliographystyle{abbrv}

%%%%%%%%%%%%%%%%%%%%%%%%%%%%%%%%%%%%%%%%%%%%%%%%%%%%%%%%%%%%%%%%%%%%%%%%%%%%
%%%%%%%%%%%%%%%%%%%%%%%%%%%%%%%%%%%%%%%%%%%%%%%%%%%%%%%%%%%%%%%%%%%%%%%%%%%%

\end{document}